\documentclass[reqno,oneside]{amsart}
\usepackage{amssymb,bm}
\usepackage{amsmath,mathtools}
\usepackage{marginnote}
\usepackage{algorithm}
\usepackage{algpseudocode}

\usepackage{tikz}
\usetikzlibrary{cd}

\usepackage[colorlinks=true]{hyperref}
\hypersetup{urlcolor=blue, citecolor=red}

\newtheorem{Theorem}{Theorem}[section]
\newtheorem{Corollary}[Theorem]{Corollary}
\newtheorem{Lemma}[Theorem]{Lemma}
\newtheorem{Proposition}[Theorem]{Proposition}

\theoremstyle{definition}
\newtheorem{Remark}[Theorem]{Remark}
\newtheorem{Assumption}[Theorem]{Assumption}

\theoremstyle{definition}
\newtheorem{Experiment}{Experiment}

\numberwithin{equation}{section}

\makeatletter
\def\mathcenterto#1#2{\mathclap{\phantom{#1}\mathclap{#2}}\phantom{#1}}
\let\old@widetilde\widetilde
\def\widetildeto#1#2{\mathcenterto{#2}{\old@widetilde{\mathcenterto{#1}{#2}}}}
\let\old@widehat\widehat
\def\widehatto#1#2{\mathcenterto{#2}{\old@widehat{\mathcenterto{#1}{#2\,}}}}
\makeatother

\def\widetilde{\widetildeto{K}}

\def\R {\mathbb R}

\newcommand{\<}{\langle}
\renewcommand{\>}{\rangle}
\newcommand{\Id}{\operatorname{Id}}

\newcommand{\inclusion}{\hookrightarrow}

\newcommand{\p}{\partial}
\newcommand{\Vol}{\operatorname{Vol}}

\newcommand{\diam}{\operatorname{diam}}
\newcommand{\dist}{\operatorname{dist}}

\renewcommand{\span}{\operatorname{span}}

\newcommand{\cT}{\mathcal T}

\newcommand{\cW}{\mathcal W}

\begin{document}
\title[The geodesic ray transform with finite measurements]{Inverting the geodesic ray transform with finite measurements: stability and reconstruction}

\author[Yernat M. Assylbekov]{Yernat M. Assylbekov}
\address{Super Dispatch}
\email{yernat.assylbekov@gmail.com}

\begin{abstract}
We study the inversion of the geodesic ray transform from finitely many local averages of its data. Under suitable geometric assumptions, we prove Lipschitz stability on any fixed finite-dimensional reconstruction space when the measurement partition is sufficiently fine. We develop convergent reconstruction algorithms based on Steepest Gradient Descent and Conjugate Gradients and implement them using piecewise constant finite element spaces. Numerical experiments in 2D and 3D illustrate the influence of geometry and measurement discretization on reconstruction quality.
\end{abstract}

\maketitle

\section{Introduction}
\subsection{Statement of the problem}
Consider a smooth compact $n$-dimensional Riemannian manifold $(M,g)$ with boundary $\p M$. Let $SM$ be its unit sphere bundle. We assume that $\p M$ is strictly convex and $(M,g)$ is \emph{non-trapping}. The latter means that for any $(x, v)\in SM$, the first non-negative time $\tau(x, v)$ when the unique geodesic $\gamma_{x,v}$ with $x=\gamma_{x,v}(0)$ and $v=\dot\gamma_{x,v}(0)$ exits $M$ is finite.

Let $\p_\pm SM$ be the set of inward/outward unit vectors on $\p M$, i.e.
$$
\p_\pm SM:=\{(x,v)\in SM:x\in\p M\text{ and }\pm\<v,\nu(x)\>_{g(x)}\ge 0\},
$$
where $\nu$ is the inward unit normal to $\p M$. The object of
study is the \emph{geodesic ray transform} defined for $f\in C(M)$ as
$$
I_g f(x,v):=\int_0^{\tau(x,v)} f(\gamma_{x,v}(t))\,dt,\quad (x,v)\in\p_+ SM.
$$
The best-known example is the classical Radon transform, introduced in \cite{Ra1917}. In two dimensions with the Euclidean metric, geodesics are straight lines, and $I_g$ agrees, up to parametrization, with the Radon transform of a function extended by zero outside $M$. This transform provides the mathematical foundation of X-ray computed tomography (CT). Related ray transforms, including weighted and attenuated variants, also arise in positron emission tomography (PET) and single-photon emission computed tomography (SPECT) \cite{KaSl2001SIAM}. The geodesic ray transform extends straight-line integration to curved trajectories determined by the metric $g$.

The domain of $I_g$ can be extended to $L^2(M)$; see Section~\ref{section: preliminaries} for details. The classical inverse problem is to reconstruct $f\in L^2(M)$ from complete knowledge of $I_g f$. In practical applications, however, the unknown function is approximated in a finite-dimensional space, and only finitely many measurements are available. We consider a finite-resolution measurement model consisting of local averages of $I_g f$ over a partition of $\p_+ SM$. For a prescribed finite-dimensional reconstruction space, we ask whether sufficiently fine measurement partitions preserve stability. The averaging model and its motivation from finite-aperture tomography are discussed in Section~\ref{section: stability estimates}.

\subsection{Previous work}
We first recall results for the classical inverse problem, where the complete geodesic ray transform $I_gf$ is assumed to be known, corresponding to an infinite number of measurements.

The injectivity of $I_g$ was first established for simple surfaces in \cite{Mu1977DANSSSR}. The proof, based on energy methods parameterizing geodesics by their endpoints, also yields an $L^2 \to H^1$ stability estimate. This injectivity result was subsequently extended to arbitrary dimensions $n \ge 2$ in \cite{BeGe1978DANSSSR, Mu1981SMJ}. Obtaining stability estimates is more delicate, mainly because of the behavior near the boundary. Under suitable curvature assumptions, an $L^2 \to H^1$ stability estimate for $n \ge 2$ was proved in \cite{Sh1999NSC}. This result was refined in \cite{PaSa2021}, where an $L^2 \to H^{1/2}$ stability estimate was derived under a different curvature condition. Both works parameterize geodesics by $\p_+ SM$. For some earlier related works, see \cite{Na1986BGT, Sh1994VSP}.

A different line of research is based on microlocal analysis. Using scattering calculus, \cite{UhVa2016IM} established injectivity together with an $L^2 \to H^1$ stability estimate for $n \ge 3$ under a global strict convex foliation assumption, even in the presence of conjugate points. For the $\p_+ SM$ parameterization, \cite{HoUh2018JDG} obtained a sharp stability estimate of type $H^{-1/2}\to L^2$. Another sharp estimate, of type $L^2\to H^{1/2}$, was established in \cite{AsSt2020IP} by applying microlocal techniques to alternative parameterizations of geodesics. See also \cite{Mo2020SIAMJMA} for a related result.

A couple of reconstruction methods have been proposed. The scattering calculus approach of \cite{UhVa2016IM} also provides a reconstruction algorithm for inverting $I_g$, whose numerical implementation was presented in \cite{AuChUh2019SIAMJIS}. Earlier, explicit inversion formulas for simple surfaces with curvature close to constant were derived in \cite{Kr2010JIIPP, PeUh2004IMRN}, and their numerical implementation was demonstrated in \cite{Mo2014SIAMJIS}.

A finite number of measurements has also been studied from the viewpoint of sampling and resolution. Sampling and aliasing are studied for the geodesic X-ray transform on simple surfaces in~\cite{MoSt2023SIAMJMA} and for the Radon transform with finitely many angles in~\cite{St2023IP}. Local resolution near singularities for discrete Radon and generalized Radon data is analyzed in~\cite{Ka2017SIAMJAM, Ka2023JFAA}, including iterative reconstruction in~\cite{Ka2025SIAMJIS}.

\subsection{Approach and main results}
Our approach specializes the finite-measurement framework of \cite{AlSa2022ARMA} to $I_g$. On a fixed finite-dimensional space $\cW\subset L^2(M)$, injectivity gives full-data $L^2 \to L^2$ type Lipschitz stability by compactness. Strong convergence of the local-average projections $P_\rho^{\p_+ SM}$ then yields operator-norm convergence on $I_g(\cW)$, preserving stability for sufficiently fine measurement meshes.

Let $\rho$ denote the measurement mesh size, an upper bound on the diameters of its cells as defined in Section~\ref{section: piecewise constant FES on influx}. Under the geometric hypotheses of Assumption~\ref{assumption: simplicity and strict convex admittance of M}, Corollary~\ref{corollary: main stability} gives a threshold $\rho_0(g,\cW)>0$ such that every measurement mesh with $\rho\le\rho_0$ satisfies

$$
    \|f\|_{L^2(M)} \le \frac{2}{\sqrt{3}}C_{g,\cW} \|P_\rho^{\p_+ SM}I_g(f)\|_{L^2_\mu(\p_+ SM)}, \qquad f\in\cW,
$$
where $C_{g,\cW}$ is a full-data stability constant.

For piecewise constant reconstruction spaces, this stability ensures the convergence of steepest gradient descent and conjugate gradients from any initial guess in this space. Both admit geometric error bounds, and conjugate gradients terminate within $\dim\cW$ iterations (Theorems~\ref{theorem: steepest gd} and~\ref{theorem: cgm}). These methods exploit the linear least-squares structure within the stability-to-reconstruction strategy of~\cite{AlSa2022ARMA}, where the Landweber iteration is used. Experiments in 2D and 3D illustrate the effects of geometry and measurement discretization.

\subsection{Structure of the paper}
The paper is organized as follows. In Section~\ref{section: preliminaries}, we collect some basic facts about the geodesic ray transform $I_g$. In Section~\ref{section: piecewise constant FES on influx}, we construct the orthogonal projection $P^{\p_+ SM}_\rho$ onto a piecewise constant finite element space on $\p_+ SM$. Applied to $I_g f$, this projection represents the local-average measurements as a piecewise constant approximation of the full data.

In Section~\ref{section: stability estimates}, we establish $L^2\to L^2$ type stability estimates using functional analytic techniques. In Section~\ref{section: reconstructions}, we derive globally convergent reconstruction algorithms using the stability estimates established in the previous section.

In Section~\ref{section: experiments}, we present numerical experiments for the implementation of the reconstruction algorithms constructed in Section~\ref{section: reconstructions}. Finally, in Section~\ref{section: conclusion}, we provide concluding remarks and discuss directions for future research.\medskip

\noindent{\bf Acknowledgements.} The author is grateful to Professor François Monard for reading an earlier version of the manuscript, offering very useful suggestions, and drawing the author's attention to the papers~\cite{Ka2017SIAMJAM, Ka2023JFAA, Ka2025SIAMJIS}. The author acknowledges substantial help from ChatGPT and OpenAI Codex, using the GPT-5.6 Sol and GPT-6 Astra models, with numerical approaches, proof sketches, Python programming, literature-search suggestions, drafting, and editing. The reconstruction approach originally used Landweber iteration; AI suggested considering steepest gradient descent and the conjugate gradient method in Section 5 and offered initial ideas for the proofs of Theorems 5.1 and 5.2. The author revised, simplified, and completed these arguments. AI assistance also helped improve the writing throughout the paper, especially in Section 6. The author formulated the inverse problem, developed the stability estimates and the Landweber reconstruction approach, and chose the final arguments. The author also substantially simplified and revised the generated code, ran and checked the computations, verified the references, and reviewed the manuscript. Before finalizing the paper, the author personally reviewed and checked the entire manuscript, including all writing, mathematical statements, assumptions, derivations, and proofs, as well as the overall article structure, notation, algorithms, numerical results, figures, references, and conclusions. This comprehensive review covered every part of the final version, including all material developed with AI assistance. The author takes responsibility for the paper’s mathematical claims, computational results, references, and statements of originality.

\section{Preliminaries}\label{section: preliminaries}
\subsection{Functional Spaces}
Let $(M,g)$ be a compact Riemannian manifold with boundary. We denote by $d\Vol_g$ the Riemannian volume form on $M$, by $d\Sigma^{2n-1}$ the Liouville volume form on $SM$, and by $d\Sigma^{2n-2}$ its induced volume form on $\p_+ SM$. We define the measure $d\mu$ on $\p_+ SM$ by
$$
d\mu(x,v) := \< v,\nu(x)\>_{g(x)}\,d\Sigma^{2n-2}(x,v),
\qquad (x,v)\in\p_+ SM,
$$
where $\nu$ is the inward unit normal to $\p M$. We use $L^2(M)$, $L^2(SM)$, and $L^2_\mu(\p_+ SM)$ to denote the completions of $C(M)$, $C(SM)$, and $C(\p_+ SM)$, respectively, with respect to the corresponding $L^2$-norms induced by $d\Vol_g$, $d\Sigma^{2n-1}$, and $d\mu$.

\subsection{The geodesic ray transform}
Now assume that $(M, g)$ is non-trapping and $\p M$ is strictly convex. Throughout this paper, we define
$$
    \diam(M):=\sup_{(x, v)\in \p_+SM}\tau(x,v) < \infty.
$$
For $f\in C(M)$, the Cauchy-Schwarz inequality along each geodesic and Santal\'o's formula \cite{Sh1994VSP} yield
$$
    \|I_gf\|_{L^2_\mu(\p_+SM)}^2
    \le \diam(M)\int_{SM}|f(x)|^2\,d\Sigma^{2n-1}(x,v)
    =\diam(M)\,\omega_{n-1}\|f\|_{L^2(M)}^2.
$$
Here we used $d\Sigma^{2n-1}(x,v)=d\Vol_g(x)\,d\sigma_x(v)$, where $d\sigma_x$ is the surface measure on $S_x M$ whose total measure is $\omega_{n-1}$, the area of the Euclidean unit sphere in $\R^n$. Therefore, $I_g$ extends uniquely to a bounded linear operator $I_g:L^2(M)\to L^2_\mu(\p_+ SM)$ satisfying
\begin{equation}\label{equation: explicit bound for operator norm for I_g acting on L^2 functions}
    \|I_g\|_{L^2(M)\to L^2_\mu(\p_+ SM)} \le \sqrt{\diam(M)\,\omega_{n-1}}.
\end{equation}
Now consider the adjoint operator $I_g^*: L^2_\mu(\p_+SM)\to L^2(M)$ that is characterized by
\begin{equation}\label{equation: adjoint identity for I_g}
    (I_gf,h)_{L^2_\mu(\p_+SM)}=(f,I_g^*h)_{L^2(M)}
\end{equation}
for which we now give an expression. Given $h\in L^2_\mu(\p_+ SM)$, extend it as a constant along each geodesic by
$$
    h_\psi(x, v) := h\big(\gamma_{x, v}(-\tau(x, -v)), \dot\gamma_{x, v}(-\tau(x, -v))\big),
    \qquad (x, v)\in SM.
$$
Then by Santal\'o's formula
$$
    \|h_\psi\|_{L^2(SM)}^2 = \int_{\p_+SM}\tau(x,v)|h(x,v)|^2\,d\mu(x,v) \le \diam(M)\|h\|_{L^2_\mu(\p_+ SM)}^2,
$$
which makes the extension well defined on $L^2$ equivalence classes. Applying the same Santal\'o's formula to \eqref{equation: adjoint identity for I_g} gives
\begin{equation}\label{equation: explicit adjoint of I_g}
    I_g^*h(x) = \int_{S_x M}h_\psi(x, v)\,d\sigma_x(v),\qquad x\in M.
\end{equation}

\section{Piecewise Constant Finite Element Subspaces on $\p_+ SM$}\label{section: piecewise constant FES on influx}
Let $(M, g)$ be a compact Riemannian manifold with boundary. A \emph{mesh} $\cT_\rho(\p_+ SM)$ of \emph{size} $\rho>0$ on $\p_+ SM$ is a finite collection of closed subsets of $\p_+ SM$, called \emph{cells}, with non-empty interiors, such that
$$
    \begin{gathered}
        \p_+ SM = \bigcup_{E\in \cT_\rho(\p_+ SM)} E, \qquad \mu(E \cap F) = 0 \quad \text{whenever} \quad E, F \in \cT_\rho(\p_+ SM),\, \quad E \neq F,\\
        \max_{E\in \cT_\rho(\p_+ SM)}\diam_{\p_+ SM}(E) \le \rho,
    \end{gathered}
$$
where $\diam_{\p_+ SM}$ denotes the diameter with respect to the metric induced on $\p_+ SM$.

Now consider the piecewise constant finite element subspace (FES) $V_\rho(\p_+ SM)\subset L^2_\mu(\p_+ SM)$ induced by $\cT_\rho(\p_+ SM)$:
$$
    V_\rho(\p_+ SM) := \span\{\psi_E\, :\,  E\in \cT_\rho(\p_+ SM)\},
$$
where
\begin{equation}\label{equation: orthonormal basis of FES}
    \psi_E := \frac{1}{\mu(E)^{1/2}}\mathbf{1}_{E}, \quad E \in \cT_\rho(\p_+ SM)
\end{equation}
are orthonormal basis functions. Note that the subspace $V_\rho(\p_+ SM)\subset L^\infty(\p_+ SM)$ is finite-dimensional and essentially the span of the characteristic functions of sets in $\cT_\rho(\p_+ SM)$.

\begin{Remark}
    Since the influx measure $d\mu$ degenerates at $S(\p M)$, the basis functions induced by a mesh $\cT_\rho(\p_+ SM)$ may become large on cells approaching $S(\p M)$. To prevent such behavior, one can impose a shape-regularity condition on the mesh. We say that $\cT_\rho(\p_+ SM)$ is \emph{shape-regular} if there exist constants $C, c > 0$, independent of cells, such that
    $$
        c\, \diam_{\p_+ SM}(E)^{2n-2} \le \mu(E) \le C\, \diam_{\p_+ SM}(E)^{2n-2}, \qquad E\in \cT_\rho(\p_+ SM).
    $$
    This condition can be used for numerical purposes to control the values of $\psi_E$ by choosing $E$ with larger $\diam_{\p_+ SM}(E)$ near $S(\p M)$. This highlights the need for an adaptive mesh near $S(\p M)$ for numerical implementations. The design of efficient meshes is outside the scope of the paper. We refer the interested reader to the specialized literature on this topic, such as \cite{Ed2001CUP, ThWaMa1985NH}.
    
\end{Remark}

Consider the projection $P_\rho^{\p_+ SM}: L^2_\mu(\p_+ SM) \to V_\rho(\p_+ SM)$ defined as
$$
    (f - P_\rho^{\p_+ SM} f, v)_{L^2_\mu(\p_+ SM)} = 0 \quad \text{for all} \quad v\in V_\rho(\p_+ SM),
$$
which can be expressed as follows
\begin{equation}\label{equation: P_rho operator expression}
    \begin{aligned}
        P_\rho^{\p_+ SM} f &= \sum_{E\in \cT_\rho(\p_+ SM)} (f, \psi_E)_{L^2_\mu(\p_+ SM)} \psi_E \\
        &= \sum_{E\in \cT_\rho(\p_+ SM)} \frac{1}{\mu(E)} \bigg(\int_{E}f(y, w)\,d\mu(y, w)\bigg)\, \mathbf{1}_{E},\qquad f\in L^2_\mu(\p_+ SM).
    \end{aligned}
\end{equation}
Let $\imath_\rho^{\p_+ SM}$ denote the inclusion operator $V_\rho(\p_+ SM) \inclusion L^2_\mu(\p_+ SM)$.

\begin{Proposition}\label{proposition: projection properties}
The operator $P_\rho^{\p_+ SM}$ satisfies
\begin{enumerate}
    \item[(a)] $(P_\rho^{\p_+ SM})^* = \imath_\rho^{\p_+ SM}$ and $(\imath_\rho^{\p_+ SM})^* = P_\rho^{\p_+ SM}$.

    \item[(b)] $\|P_\rho^{\p_+ SM}\|_{L^2_\mu(\p_+ SM) \to L^2_\mu(\p_+ SM)} = 1$ and $\|\Id - P_\rho^{\p_+ SM}\|_{L^2_\mu(\p_+ SM) \to L^2_\mu(\p_+ SM)} = 1$.

    \item[(c)] If $f\in L^2_\mu(\p_+ SM)$, then
    $$
        \|f - P_\rho^{\p_+ SM} f\|_{L^2_\mu(\p_+ SM)} = \inf_{v \in V_\rho(\p_+ SM)} \|f - v\|_{L^2_\mu(\p_+ SM)}.
    $$
\end{enumerate}
\end{Proposition}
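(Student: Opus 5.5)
The whole proposition is standard Hilbert-space theory once we know that $P_\rho^{\p_+ SM}$ is the orthogonal projection onto the closed subspace $V_\rho(\p_+ SM)$. That subspace is finite-dimensional, hence closed, and the defining Galerkin relation says exactly that $f - P_\rho^{\p_+ SM} f \perp V_\rho(\p_+ SM)$. First I would check from \eqref{equation: P_rho operator expression} that the $\psi_E$ in \eqref{equation: orthonormal basis of FES} are orthonormal. This uses $\mu(E\cap F)=0$ for $E\neq F$ and $\mu(E)>0$; I expect the latter to follow because cells have non-empty interior and $d\mu$ has positive density on the interior of $\p_+ SM$. It then follows that $P_\rho^{\p_+ SM}$ is idempotent, with range $V_\rho(\p_+ SM)$ and kernel $V_\rho(\p_+ SM)^\perp$.

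For (a), we regard $P_\rho^{\p_+ SM}$ as a map $L^2_\mu(\p_+ SM)\to V_\rho(\p_+ SM)$. For $f\in L^2_\mu(\p_+ SM)$ and $v\in V_\rho(\p_+ SM)$, the defining relation gives
$$
(P_\rho^{\p_+ SM} f, v)_{V_\rho(\p_+ SM)} = (f,v)_{L^2_\mu(\p_+ SM)} = (f,\imath_\rho^{\p_+ SM} v)_{L^2_\mu(\p_+ SM)} .
$$
Hence $(P_\rho^{\p_+ SM})^* = \imath_\rho^{\p_+ SM}$, and taking adjoints once more (all operators are bounded) gives $(\imath_\rho^{\p_+ SM})^* = P_\rho^{\p_+ SM}$.

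For (b), I would use the Pythagorean identity
$$
\|f\|^2 = \|P_\rho^{\p_+ SM} f\|^2 + \|f-P_\rho^{\p_+ SM} f\|^2 ,
$$
which shows that both $P_\rho^{\p_+ SM}$ and $\Id-P_\rho^{\p_+ SM}$ have norm at most $1$. Equality for $P_\rho^{\p_+ SM}$ comes from testing on any $\psi_E$, which is fixed by the projection. Equality for $\Id-P_\rho^{\p_+ SM}$ requires a nonzero vector orthogonal to $V_\rho(\p_+ SM)$, and this is the only genuine point of the proof. Since $V_\rho(\p_+ SM)$ is finite-dimensional, it suffices to show that $L^2_\mu(\p_+ SM)$ is infinite-dimensional. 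To see this, I would take a cell $E$ and split its interior into two disjoint open sets of positive measure. The function equal to $1/\mu(A)$ on one piece $A$ and $-1/\mu(B)$ on the other piece $B$, and zero elsewhere, is orthogonal to every $\psi_F$ and is nonzero. It is then fixed by $\Id-P_\rho^{\p_+ SM}$.

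For (c), for any $v\in V_\rho(\p_+ SM)$ I would write $f-v=(f-P_\rho^{\p_+ SM} f)+(P_\rho^{\p_+ SM} f-v)$, where the second summand lies in $V_\rho(\p_+ SM)$. Pythagoras gives $\|f-v\|^2=\|f-P_\rho^{\p_+ SM}f\|^2+\|P_\rho^{\p_+ SM}f-v\|^2\ge\|f-P_\rho^{\p_+ SM}f\|^2$. The infimum is attained at $v=P_\rho^{\p_+ SM}f$.
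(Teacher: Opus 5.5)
Your proof is correct and follows the paper's argument. Adjointness comes from the orthogonality of $f-P_\rho^{\p_+ SM}f$ to $V_\rho(\p_+ SM)$; the paper verifies it via the explicit cell-average formula, which is the same computation. The Pythagorean identity gives the bounds in (b), and the splitting $f-v=(f-P_\rho^{\p_+ SM}f)+(P_\rho^{\p_+ SM}f-v)$ gives (c).

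Your explicit nonzero element of $V_\rho(\p_+ SM)^\perp$ fills in a point the paper only asserts, namely that $\Id-P_\rho^{\p_+ SM}$ has nonzero range. You need only two disjoint open subsets $A,B$ of one cell with $\mu(A),\mu(B)>0$. These exist because any nonempty open subset of $\p_+ SM$ meets the region where $\langle v,\nu\rangle>0$. A literal splitting of the interior into two disjoint open sets is not needed, and is impossible when the interior is connected.
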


\begin{proof}
    For $f\in L^2_\mu(\p_+SM)$ and $v\in V_\rho(\p_+SM)$, using \eqref{equation: P_rho operator expression} one can directly show
    $$
        (P_\rho^{\p_+ SM} f, v)_{L^2_\mu(\p_+ SM)} = (f, \imath_\rho^{\p_+ SM} v)_{L^2_\mu(\p_+ SM)}.
    $$
    Thus $(P_\rho^{\p_+SM})^*=\imath_\rho^{\p_+SM}$ and $(\imath_\rho^{\p_+ SM})^* = P_\rho^{\p_+ SM}$. Next, for $f\in L^2_\mu(\p_+ SM)$, orthogonality gives
    $$
        \begin{aligned}
            \|f\|_{L^2_\mu(\p_+ SM)}^2 &= \|P_\rho^{\p_+ SM} f + (f - P_\rho^{\p_+ SM} f)\|_{L^2_\mu(\p_+ SM)}^2 \\
            &= \|P_\rho^{\p_+ SM} f\|_{L^2_\mu(\p_+ SM)}^2 + \|f - P_\rho^{\p_+ SM} f\|_{L^2_\mu(\p_+ SM)}^2.
        \end{aligned}
    $$
    Hence, $\|P_\rho^{\p_+ SM}\|_{L^2_\mu(\p_+ SM) \to L^2_\mu(\p_+ SM)} \le 1$ and $\|\Id - P_\rho^{\p_+ SM}\|_{L^2_\mu(\p_+ SM) \to L^2_\mu(\p_+ SM)} \le 1$. Each operator acts as the identity on its nonzero range, so both norms equal one. Finally, for every $v\in V_\rho(\p_+SM)$, orthogonality gives
    $$
        \|f - v\|_{L^2_\mu(\p_+ SM)}^2 = \|f - P_\rho^{\p_+ SM}f\|_{L^2_\mu(\p_+ SM)}^2 + \|P_\rho^{\p_+ SM}f - v\|_{L^2_\mu(\p_+ SM)}^2.
    $$
    The minimum is therefore attained at $v=P_\rho^{\p_+SM}f$.
\end{proof}

We conclude this section with two approximation results for the projection $P_\rho^{\p_+ SM}$.

\begin{Lemma}\label{lemma: approximations of C functions with P_rho}
If $f\in C(\p_+ SM)$, for every $\varepsilon > 0$, there exists $\rho_0 > 0$ such that the induced projection $P_\rho^{\p_+ SM}: L^2_\mu(\p_+ SM) \to V_\rho(\p_+ SM)$ of any mesh $\cT_\rho(\p_+ SM)$ on $\p_+ SM$ with $\rho \le \rho_0$ satisfies $\|f - P_\rho^{\p_+ SM} f\|_{L^\infty(\p_+ SM)} < \varepsilon$.
\end{Lemma}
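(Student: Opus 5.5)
The plan is to use uniform continuity of $f$ on the compact set $\p_+ SM$, together with the fact that on each cell $P_\rho^{\p_+ SM} f$ is an average of $f$ with respect to the probability measure $\mu|_E/\mu(E)$.

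First we record why the averages in \eqref{equation: P_rho operator expression} are well defined. Each cell $E$ is closed with non-empty interior. The density $\<v,\nu(x)\>_{g(x)}$ of $\mu$ is positive on the open part of $\p_+ SM$ away from $S(\p M)$, and $S(\p M)$ has empty interior. Hence $\mu(E)>0$.

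Since $\p_+ SM$ is a compact metric space with the induced metric $d_{\p_+ SM}$, the function $f$ is uniformly continuous. Given $\varepsilon>0$, choose $\rho_0>0$ such that $|f(p)-f(q)|<\varepsilon/2$ whenever $d_{\p_+ SM}(p,q)\le\rho_0$. This $\rho_0$ depends only on $f$ and $\varepsilon$, not on the mesh.

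Now fix any mesh $\cT_\rho(\p_+ SM)$ with $\rho\le\rho_0$, a cell $E$, and a point $p\in E$. For every $q\in E$ we have $d(p,q)\le\diam(E)\le\rho\le\rho_0$. Therefore
$$
\Big|f(p)-\frac{1}{\mu(E)}\int_E f\,d\mu\Big|\le \frac{1}{\mu(E)}\int_E |f(p)-f(q)|\,d\mu(q)\le \frac{\varepsilon}{2}.
$$
The cells cover $\p_+ SM$, and their pairwise overlaps have $\mu$-measure zero. So for $\mu$-a.e.\ $p$ there is exactly one cell containing $p$, and $P_\rho^{\p_+ SM}f(p)$ equals the average over that cell. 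This gives $|f-P_\rho^{\p_+ SM}f|\le\varepsilon/2<\varepsilon$ $\mu$-a.e., which is the required $L^\infty$ bound.

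The main point to check is the interpretation of $L^\infty$ on the overlaps $E\cap F$. There the sum in \eqref{equation: P_rho operator expression} counts two indicator functions, so pointwise values can be wrong. However, these sets are $\mu$-null. Since $\mu$ and $d\Sigma^{2n-2}$ are mutually absolutely continuous on $\p_+ SM$ (the density vanishes only on the null set $S(\p M)$), the essential supremum is the same for either measure. The second point is $\mu(E)>0$, handled above. Everything else is the standard uniform-continuity estimate.
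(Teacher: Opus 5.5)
Your proposal is correct and follows the paper's proof: uniform continuity of $f$ on the compact space $\p_+ SM$ with a mesh-independent $\rho_0$, then the cell-wise averaging bound $|f(p)-P_\rho^{\p_+ SM}f(p)|\le \mu(E)^{-1}\int_E|f(p)-f(q)|\,d\mu(q)$. Your additional points are details the paper leaves implicit, and you handle them correctly: positivity of $\mu(E)$, the $\mu$-null overlaps relevant to the $L^\infty$ norm, and the use of $\varepsilon/2$ to obtain the strict inequality cleanly.
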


\begin{proof}
The compactness of $\p_+ SM$ implies the uniform continuity of $f$. In other words, there exists $\rho_0 > 0$ such that $|f(x, v) - f(y, w)| < \varepsilon$ if $\dist_{\p_+ SM}\big((x, v), (y, w)\big) \le \rho_0$, where $\dist_{\p_+ SM}$ denotes the distance function with respect to the metric induced on $\p_+ SM$. Thus, for any mesh $\cT_\rho(\p_+ SM)$ on $\p_+ SM$ with $\rho \le \rho_0$,
\begin{equation}\label{equation: uniform continuity on E}
    |f(x, v) - f(y, w)| < \varepsilon \quad \text{for} \quad (x, v), (y, w)\in E, \quad E\in \cT_\rho(\p_+ SM).
\end{equation}
By \eqref{equation: P_rho operator expression}, we have
$$
    |f(x, v) - P_\rho^{\p_+ SM} f(x, v)| \le \frac{1}{\mu(E)} \int_{E}|f(x, v) - f(y, w)|\,d\mu(y, w) \quad \text{for} \quad (x, v)\in E, \quad E\in \cT_\rho(\p_+ SM).
$$
Since $\sup_{(x, v), (y, w) \in E}\dist_{\p_+ SM}\big((x, v), (y, w)\big) = \diam_{\p_+ SM}(E) \le \rho$, we can use \eqref{equation: uniform continuity on E} to get $\|f - P_\rho^{\p_+ SM} f\|_{L^\infty(E)} < \varepsilon$ for every $E\in \cT_\rho(\p_+ SM)$. The proof is complete.
\end{proof}

\begin{Proposition}\label{proposition: P_rho converges to Id strongly}
If $f\in L^2_\mu(\p_+ SM)$, for every $\varepsilon > 0$, there exists $\rho_0 > 0$ such that the induced projection $P_\rho^{\p_+ SM}: L^2_\mu(\p_+ SM) \to V_\rho(\p_+ SM)$ of any mesh $\cT_\rho(\p_+ SM)$ on $\p_+ SM$ with $\rho \le \rho_0$ satisfies
$$
    \|f - P_\rho^{\p_+ SM} f\|_{L^2_\mu(\p_+ SM)} < \varepsilon.
$$
\end{Proposition}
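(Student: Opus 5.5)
The plan is a density argument: reduce to continuous functions and then invoke Lemma~\ref{lemma: approximations of C functions with P_rho}.

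Fix $f\in L^2_\mu(\p_+ SM)$ and $\varepsilon>0$. By the definition of $L^2_\mu(\p_+ SM)$ as the completion of $C(\p_+ SM)$, choose $h\in C(\p_+ SM)$ with $\|f-h\|_{L^2_\mu(\p_+ SM)}<\varepsilon/3$. Then split
$$
f-P_\rho^{\p_+ SM}f=(f-h)+(h-P_\rho^{\p_+ SM}h)+P_\rho^{\p_+ SM}(h-f).
$$
By Proposition~\ref{proposition: projection properties}(b), $\|P_\rho^{\p_+ SM}\|=1$ for every mesh. Hence the first and third terms are each bounded by $\varepsilon/3$, uniformly in the mesh. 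Equivalently, one may bound $(\Id-P_\rho^{\p_+ SM})(f-h)$ by $\|f-h\|$ directly, since $\|\Id-P_\rho^{\p_+ SM}\|=1$.

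For the middle term we pass from $L^\infty$ to $L^2_\mu$. Since $\p_+ SM$ is compact and the weight $\<v,\nu\>$ is bounded by $1$, the total mass $\mu(\p_+ SM)$ is finite. Therefore
$$
\|h-P_\rho^{\p_+ SM}h\|_{L^2_\mu}\le \mu(\p_+ SM)^{1/2}\,\|h-P_\rho^{\p_+ SM}h\|_{L^\infty}.
$$
Apply Lemma~\ref{lemma: approximations of C functions with P_rho} to $h$ with tolerance $\varepsilon/(3\mu(\p_+ SM)^{1/2})$. This gives $\rho_0>0$, depending only on $h$ (and hence on $f$ and $\varepsilon$), such that for every mesh with $\rho\le\rho_0$ the middle term is at most $\varepsilon/3$. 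If $\mu(\p_+ SM)=0$ the statement is trivial; otherwise sum the three bounds.

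There is essentially no obstacle. The only points to check are that $\mu(\p_+ SM)<\infty$ and that the threshold $\rho_0$ is uniform over all meshes of size at most $\rho_0$. The latter holds because the lemma already provides this uniformity, and the norm bound on $P_\rho^{\p_+ SM}$ is mesh-independent.
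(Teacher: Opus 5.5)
Your proof is correct and follows essentially the paper's argument: approximate $f$ by a continuous function, apply Lemma~\ref{lemma: approximations of C functions with P_rho}, and convert the $L^\infty$ bound to $L^2_\mu$ through the factor $\mu(\p_+ SM)^{1/2}$. The only cosmetic difference is how you control the $f-h$ error: the paper uses the best-approximation property, Proposition~\ref{proposition: projection properties}(c), with an $\varepsilon/2$ split, while you use $\|P_\rho^{\p_+ SM}\|=1$ (or $\|\Id-P_\rho^{\p_+ SM}\|=1$) from part (b). These are equivalent facts about orthogonal projections.
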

\begin{proof}
By the density of $C(\p_+ SM)$ in $L^2_\mu(\p_+ SM)$, there is $f_\varepsilon \in C(\p_+ SM)$ such that
$$
    \|f - f_\varepsilon\|_{L^2_\mu(\p_+ SM)} < \frac{\varepsilon}{2}.
$$
By Lemma~\ref{lemma: approximations of C functions with P_rho}, there exists $\rho_0 > 0$ such that the induced projection $P_\rho^{\p_+ SM}: L^2_\mu(\p_+ SM) \to V_\rho(\p_+ SM)$ of any mesh $\cT_\rho(\p_+ SM)$ on $\p_+ SM$ with $\rho \le \rho_0$ satisfies
$$
    \|f_\varepsilon - P_\rho^{\p_+ SM} f_\varepsilon\|_{L^\infty(\p_+ SM)} < \frac{\varepsilon}{2 \mu(\p_+ SM)^{1/2}}.
$$
Since $P_\rho^{\p_+ SM} f_\varepsilon \in V_\rho(\p_+ SM)$, using the triangle inequality,
$$
    \begin{aligned}
        \inf_{v\in V_\rho(\p_+ SM)}\|f - v\|_{L^2_\mu(\p_+ SM)} &\le \|f - P_\rho^{\p_+ SM} f_\varepsilon\|_{L^2_\mu(\p_+ SM)} \\
        &\le \|f - f_\varepsilon\|_{L^2_\mu(\p_+ SM)} + \|f_\varepsilon - P_\rho^{\p_+ SM} f_\varepsilon\|_{L^2_\mu(\p_+ SM)} \\
        &\le \|f - f_\varepsilon\|_{L^2_\mu(\p_+ SM)} + \|f_\varepsilon - P_\rho^{\p_+ SM} f_\varepsilon\|_{L^\infty(\p_+ SM)} \mu(\p_+ SM)^{1/2} < \varepsilon.
    \end{aligned}
$$
Now, applying Proposition~\ref{proposition: projection properties}(c), we get
$$
    \|f - P_\rho^{\p_+ SM} f\|_{L^2_\mu(\p_+ SM)} = \inf_{v \in V_\rho(\p_+ SM)} \|f - v\|_{L^2_\mu(\p_+ SM)} < \varepsilon
$$
as desired.
\end{proof}

\section{Stability Estimates}\label{section: stability estimates}
Let $(M, g)$ be a compact non-trapping Riemannian manifold with convex boundary. Throughout the paper, we make the following additional assumption.
\begin{Assumption}\label{assumption: simplicity and strict convex admittance of M}
    $(M, g)$ has no conjugate points or admits a smooth strictly convex function when $\dim(M) \ge 3$.
\end{Assumption}

Fix an arbitrary finite-dimensional subspace $\cW\subset L^2(M)$, and let $P_{\cW}:L^2(M)\to\cW$ be the orthogonal projection. Let $\cT_\rho(\p_+ SM)$ be a mesh on $\p_+ SM$, with the induced piecewise constant FES $V_\rho(\p_+ SM)\subset L^2_\mu(\p_+ SM)$ and the associated projection $P_\rho^{\p_+ SM}:L^2_\mu(\p_+ SM)\to V_\rho(\p_+ SM)$. The measurement mesh will need to be sufficiently fine depending on $\cW$.

Finite detector apertures are commonly represented in tomographic reconstruction models by local weighted averages of ray-transform data; see \cite[Sections~V.4.2, V.5.1, V.5.2]{Na1986BGT}, \cite[Section~5.1.2]{KaSl2001SIAM}, \cite[Section~3.2]{RiFa2003}, and \cite[Section~2.1]{YuWa2012}. A further motivation comes from the sampling analysis of \cite[Section~3.2.4]{St2023IP}, which discusses local averaging, and from the references \cite{Co1978PMB, ShLo1974IEEETNC} therein. Motivated by these models, we adopt an idealized measurement model consisting of $\mu$-averages over cells of the full boundary data space $\p_+ SM$.

For $f\in L^2(M)$, the resulting finite collection of measurements consists of the local averages
$$
    \bigg\{\frac{1}{\mu(E)}\int_{E}I_g f \,d\mu\,:\, E\in \cT_\rho(\p_+ SM)\bigg\}.
$$
We represent these averages by the piecewise constant function
$$
    P_\rho^{\p_+ SM}I_g f (x, v) = \sum_{E\in \cT_\rho(\p_+ SM)} \bigg(\frac{1}{\mu(E)}\int_{E}I_g f(y, w)\,d\mu(y, w)\bigg)\mathbf{1}_{E}(x,v),\quad (x,v) \in \p_+ SM.
$$
Each cell average is a bounded linear functional on $L^2_\mu(\p_+ SM)$, so this measurement model is well defined for $f\in L^2(M)$ without requiring pointwise evaluation of $I_g f$.

The following result shows that sufficiently fine measurement partitions preserve stability on $\cW$ and yield a stability estimate for general $f\in L^2(M)$ with an additional term accounting for its approximation error in $\cW$.

\begin{Theorem}\label{theorem: main stability}
Let $(M, g)$ be a compact non-trapping Riemannian manifold with convex boundary and satisfying Assumption~\ref{assumption: simplicity and strict convex admittance of M}. For the fixed finite-dimensional subspace $\cW\subset L^2(M)$, there exists $\rho_0=\rho_0(g,\cW)>0$ such that, for any mesh $\cT_\rho(\p_+ SM)$ with $\rho \le \rho_0$,

$$
    \|f\|_{L^2(M)} \le \frac{2}{\sqrt{3}} C_{g, \cW}\|P_\rho^{\p_+ SM} I_g f\|_{L^2_\mu(\p_+ SM)} + \bigg(1 + \frac{2}{\sqrt{3}} C_{g, \cW}\sqrt{\diam(M)\, \omega_{n - 1}}\,\bigg)\|f - P_{\cW} f \|_{L^2(M)}
$$
for all $f\in L^2(M)$.
\end{Theorem}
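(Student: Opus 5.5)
The plan is to first establish full-data stability on $\cW$, then transfer it to the averaged data, and finally pass to general $f$.

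\textbf{Full-data stability on $\cW$.} Under Assumption~\ref{assumption: simplicity and strict convex admittance of M}, $I_g$ is injective on $L^2(M)$, and I would quote this from the literature cited in the introduction: \cite{Mu1977DANSSSR, BeGe1978DANSSSR, Mu1981SMJ} in the no-conjugate-point case and \cite{UhVa2016IM} under a strictly convex function. Restricted to the finite-dimensional space $\cW$, the map $f\mapsto\|I_g f\|_{L^2_\mu(\p_+SM)}$ is continuous. It is also positive on the compact unit sphere of $\cW$. Hence it has a positive minimum $m$, and we set $C_{g,\cW}:=1/m$. This gives $\|f\|_{L^2(M)}\le C_{g,\cW}\|I_gf\|_{L^2_\mu(\p_+SM)}$ for $f\in\cW$.

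\textbf{Transfer to the averaged data.} The space $I_g(\cW)$ is finite-dimensional; pick a basis $h_1,\dots,h_k$. By Proposition~\ref{proposition: P_rho converges to Id strongly}, for each $j$ there is $\rho_j$ with $\|h_j-P_\rho^{\p_+SM}h_j\|$ small whenever $\rho\le\rho_j$. Since the unit sphere of $I_g(\cW)$ is compact and all norms on it are equivalent, this strong convergence on a basis upgrades to a uniform bound. Concretely, write $h=\sum_j c_jh_j$ with $\sum_j|c_j|\le K\|h\|$. Then $\|(\Id-P_\rho^{\p_+SM})h\|\le K\max_j\|(\Id-P_\rho^{\p_+SM})h_j\|\,\|h\|$.

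Choose $\rho_0$ so that $\|(\Id-P_\rho^{\p_+SM})h\|\le\tfrac12\|h\|$ for all $h\in I_g(\cW)$. The orthogonal decomposition from the proof of Proposition~\ref{proposition: projection properties} then gives $\|P_\rho^{\p_+SM}h\|^2\ge(1-\tfrac14)\|h\|^2$. So $\|h\|\le\frac{2}{\sqrt3}\|P_\rho^{\p_+SM}h\|$. Combined with the first step,
$$\|f\|_{L^2(M)}\le \tfrac{2}{\sqrt3}C_{g,\cW}\|P_\rho^{\p_+SM}I_gf\|_{L^2_\mu(\p_+SM)}\quad\text{for } f\in\cW.$$
This explains the constant $2/\sqrt3$. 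The threshold $\rho_0$ depends only on $g$ and $\cW$, because the basis of $I_g(\cW)$ does.

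\textbf{General $f\in L^2(M)$.} Write $f=P_\cW f+(f-P_\cW f)$, so $\|f\|\le\|P_\cW f\|+\|f-P_\cW f\|$. Apply the previous estimate to $P_\cW f$. Then bound
$$\|P_\rho^{\p_+SM}I_gP_\cW f\|\le\|P_\rho^{\p_+SM}I_gf\|+\|P_\rho^{\p_+SM}I_g(f-P_\cW f)\|.$$
For the last term use $\|P_\rho^{\p_+SM}\|=1$ from Proposition~\ref{proposition: projection properties}(b) together with \eqref{equation: explicit bound for operator norm for I_g acting on L^2 functions}. Collecting terms yields exactly the stated inequality.

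\textbf{Main obstacle.} The only genuinely nontrivial input is the injectivity of $I_g$ on $L^2(M)$ under Assumption~\ref{assumption: simplicity and strict convex admittance of M}, which is taken from the cited works rather than proved here. The other delicate point is making the choice of $\rho_0$ uniform over all meshes of size $\le\rho$. This is handled because Proposition~\ref{proposition: P_rho converges to Id strongly} is already uniform over meshes for each fixed $h_j$, and only finitely many $h_j$ are involved.
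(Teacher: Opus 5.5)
Your proposal is correct and follows the paper's proof in all essentials. The paper also combines compactness and injectivity to get $C_{g,\cW}$, makes $\Id-P_\rho^{\p_+SM}$ uniformly small on the finite-dimensional range, uses the Pythagorean identity to produce $2/\sqrt3$, and finishes with the triangle inequality at $h=P_{\cW}f$. Your variations are minor. You upgrade strong convergence to a uniform bound with a basis of $I_g(\cW)$ and norm equivalence instead of the paper's $\eta$-net on $S(\cW)$, and you use the relative threshold $\|(\Id-P_\rho^{\p_+SM})h\|\le\frac12\|h\|$ on $I_g(\cW)$. This threshold is implied by the paper's condition $\|(\Id-P_\rho^{\p_+SM})I_g\imath_{\cW}\|\le 1/(2C_{g,\cW})$ and yields the same constant.
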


\begin{Remark}
    Here $C_{g, \cW}$ is the stability constant that appears in Proposition~\ref{proposition: stability of I_g over W} below. Hence, the price we pay for a finite number of measurements is the increase of this constant by a factor of $2/\sqrt{3}$.
\end{Remark}

\begin{Corollary}\label{corollary: main stability}
Let $(M, g)$ be a compact non-trapping Riemannian manifold with convex boundary and satisfying Assumption~\ref{assumption: simplicity and strict convex admittance of M}. For the fixed finite-dimensional subspace $\cW\subset L^2(M)$, there exists $\rho_0=\rho_0(g,\cW)>0$ such that, for any mesh $\cT_\rho(\p_+ SM)$ with $\rho \le \rho_0$,
$$
    \|f\|_{L^2(M)} \le \frac{2}{\sqrt{3}} C_{g, \cW}\|P_\rho^{\p_+ SM} I_g\imath_{\cW}(f)\|_{L^2_\mu(\p_+ SM)}
$$
for all $f\in \cW$.
\end{Corollary}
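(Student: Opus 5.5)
The corollary follows from Theorem~\ref{theorem: main stability} by restricting to $f\in\cW$. I take the same threshold $\rho_0=\rho_0(g,\cW)$ given by the theorem and fix a mesh $\cT_\rho(\p_+ SM)$ with $\rho\le\rho_0$.

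Let $f\in\cW$. Since $P_{\cW}$ is the orthogonal projection onto $\cW$, we have $P_{\cW}f=f$, so $\|f-P_{\cW}f\|_{L^2(M)}=0$. The second term on the right-hand side of the inequality in Theorem~\ref{theorem: main stability} therefore vanishes, regardless of the size of its coefficient $1+\frac{2}{\sqrt3}C_{g,\cW}\sqrt{\diam(M)\,\omega_{n-1}}$. The inclusion $\imath_{\cW}:\cW\inclusion L^2(M)$ only regards $f$ as an element of $L^2(M)$, so $P_\rho^{\p_+ SM}I_g\imath_{\cW}(f)=P_\rho^{\p_+ SM}I_gf$. Substituting both facts into the theorem gives exactly the stated bound with constant $\frac{2}{\sqrt3}C_{g,\cW}$.

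There is no real obstacle here; the substance lies in Theorem~\ref{theorem: main stability} itself. For orientation, the theorem presumably rests on two ingredients. The first is full-data stability on $\cW$, $\|f\|\le C_{g,\cW}\|I_gf\|$, which holds because $I_g$ is injective under Assumption~\ref{assumption: simplicity and strict convex admittance of M} and $\cW$ is finite-dimensional, so the unit sphere of $\cW$ is compact. The second is that strong convergence of $P_\rho^{\p_+ SM}$ to the identity (Proposition~\ref{proposition: P_rho converges to Id strongly}) becomes uniform on the finite-dimensional space $I_g(\cW)$. One applies it on a finite basis and extends by compactness to get $\|(\Id-P_\rho^{\p_+ SM})I_gf\|\le\frac12\|I_gf\|$. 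Then the Pythagorean identity gives $\|P_\rho^{\p_+ SM} I_gf\|^2\ge\frac34\|I_gf\|^2$, which is where the factor $2/\sqrt3$ comes from.

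For the corollary, the only point to check is that the $\rho_0$ taken from the theorem depends only on $(g,\cW)$, which the theorem's statement guarantees.
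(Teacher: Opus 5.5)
Your proposal is correct and matches how the paper obtains the corollary: for $f\in\cW$ you have $P_{\cW}f=f$, so the approximation term in Theorem~\ref{theorem: main stability} vanishes and the bound with constant $\frac{2}{\sqrt3}C_{g,\cW}$ remains. The inequality for $h\in\cW$ is in fact already an intermediate step in the paper's proof of the theorem, which uses the absolute bound $\|(\Id-P_\rho^{\p_+ SM})I_g\imath_{\cW}\|\le 1/(2C_{g,\cW})$ instead of your relative version $\frac12\|I_gf\|$; both give the same factor $2/\sqrt3$.
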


For the proof of Theorem~\ref{theorem: main stability}, we need the following two simple but key results, which will be used a few times throughout the paper. To that end, define
$$
    S(\cW) := \{ f\in \cW: \|f\|_{L^2(M)} = 1\},
$$
i.e., the unit sphere in $\cW$. Since $\cW$ is finite dimensional, $S(\cW)$ is compact.

\begin{Proposition}\label{proposition: stability of I_g over W}
    Let $(M, g)$ be a compact non-trapping Riemannian manifold with convex boundary and satisfying Assumption~\ref{assumption: simplicity and strict convex admittance of M}. Then there exists a constant $C_{g, \cW} > 0$ such that 
    $$
        \|f\|_{L^2(M)} \le C_{g, \cW} \|I_g \imath_{\cW}(f)\|_{L^2_\mu(\p_+ SM)}
    $$
    for all $f\in \cW$.
\end{Proposition}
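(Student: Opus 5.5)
The argument is a compactness argument on the unit sphere $S(\cW)$, combined with injectivity of $I_g$ on $L^2(M)$. I will consider the continuous function
$$
    \Phi: S(\cW)\to[0,\infty),\qquad \Phi(f):=\|I_g\imath_{\cW}(f)\|_{L^2_\mu(\p_+ SM)}.
$$
Continuity follows from the bound \eqref{equation: explicit bound for operator norm for I_g acting on L^2 functions}: $I_g$ is a bounded linear operator $L^2(M)\to L^2_\mu(\p_+ SM)$, so $\Phi$ is even Lipschitz on $S(\cW)$ with constant $\sqrt{\diam(M)\,\omega_{n-1}}$. Since $\cW$ is finite dimensional, $S(\cW)$ is compact, and $\Phi$ attains its minimum $m:=\min_{S(\cW)}\Phi$ at some $f_0\in S(\cW)$.

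The key step is to show $m>0$, which is exactly injectivity of $I_g$ on $\cW$. If $m=0$, then $I_g f_0=0$ with $\|f_0\|_{L^2(M)}=1$, and I need a contradiction. Here I use Assumption~\ref{assumption: simplicity and strict convex admittance of M}:
\begin{itemize}
\item On a compact non-trapping manifold with strictly convex boundary and no conjugate points (a simple manifold), $I_g$ is injective on $L^2(M)$. This follows from the $L^2\to H^{1/2}$ type stability estimates recalled in the introduction (\cite{Sh1999NSC, PaSa2021, AsSt2020IP}), or from the sharp estimate of \cite{HoUh2018JDG}.
\item In dimension $n\ge3$ under the strictly convex function (foliation) condition, injectivity on $L^2$ follows from the result of \cite{UhVa2016IM}.
\end{itemize}
This step is the main obstacle. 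The cited results are usually stated for smooth functions or for $L^2$ with a stability estimate, and some care is needed to say that the available estimate (e.g. $\|f\|_{L^2}\lesssim\|I_g f\|_{H^{1/2}}$-type, or $L^2\to H^1$ for the normal operator) really implies $\ker I_g=\{0\}$ on $L^2(M)$, not only on $C^\infty(M)$. If the literature gives only an estimate for $I_g^*I_g$ or in weaker norms, I would argue as follows. From $I_gf_0=0$ we get $I_g^*I_gf_0=0$, and any estimate of the form $\|f\|_{H^{s}}\le C\|I_g^*I_g f\|_{H^{t}}$ then forces $f_0=0$. Whichever form is quoted, the conclusion is $f_0=0$, contradicting $\|f_0\|=1$.

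With $m>0$, I set $C_{g,\cW}:=1/m$. For nonzero $f\in\cW$, applying $\Phi(f/\|f\|)\ge m$ and using linearity of $I_g\imath_{\cW}$ gives
$$
    \|f\|_{L^2(M)}\le C_{g,\cW}\,\|I_g\imath_{\cW}(f)\|_{L^2_\mu(\p_+ SM)}.
$$
The case $f=0$ is trivial.
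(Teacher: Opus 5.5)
Your proposal is correct and follows the paper's own proof. Both use continuity of $f\mapsto\|I_g\imath_{\cW}(f)\|_{L^2_\mu(\p_+ SM)}$ from boundedness of $I_g$, get a minimum on the compact sphere $S(\cW)$ that is positive by injectivity of $I_g$, and set $C_{g,\cW}=1/m$. Your extra discussion of why injectivity holds on all of $L^2(M)$, not just on smooth functions, covers a point the paper skips by simply invoking injectivity of $I_g$.
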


\begin{proof}
    Boundedness of $I_g: L^2(M) \to L^2_\mu(\p_+ SM)$ implies that $\cW\ni f\mapsto \|I_g \imath_{\cW}(f)\|_{L^2_\mu(\p_+ SM)}\in [0, \infty)$ is continuous. Then the compactness of $S(\cW)$ implies the existence of a constant $c_{g, \cW} \ge 0$ such that
    $$
        \|I_g \imath_{\cW}(f)\|_{L^2_\mu(\p_+ SM)} \ge c_{g, \cW} \quad \text{for all} \quad f\in S(\cW).
    $$
    It is clear that $c_{g, \cW} = \inf_{f\in S(\cW)}\|I_g \imath_{\cW}(f)\|_{L^2_\mu(\p_+ SM)}$. The injectivity of $I_g$ guarantees that $c_{g, \cW} > 0$. Setting $C_{g, \cW} = 1 / c_{g, \cW}$, we complete the proof.
\end{proof}

\begin{Remark}
    The proof of Proposition~\ref{proposition: stability of I_g over W} uses Assumption~\ref{assumption: simplicity and strict convex admittance of M} only to ensure the injectivity of $I_g$. Once boundedness of $I_g$ is established, injectivity alone yields the stability estimate on every fixed finite-dimensional subspace $\cW\subset L^2(M)$, by compactness of its unit sphere. In fact, for a prescribed $\cW$, it suffices that $I_g\imath_{\cW}$ be injective, or equivalently, that $\ker I_g\cap\cW=\{0\}$.
\end{Remark}

\begin{Proposition}\label{proposition: operator norm (Id - P_rho)I_g is small if rho is small enough}
    For every $\varepsilon>0$, there exists $\rho_0 > 0$ such that
    $$
        \|(\Id - P_\rho^{\p_+ SM}) I_g\imath_{\cW}\|_{\cW \to L^2_\mu(\p_+ SM)} < \varepsilon
    $$
    for any mesh $\cT_\rho(\p_+ SM)$ with $\rho\le \rho_0$.
\end{Proposition}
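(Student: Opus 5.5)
The plan is to upgrade the strong (pointwise) convergence of Proposition~\ref{proposition: P_rho converges to Id strongly} to uniform convergence on the image of the unit sphere of $\cW$. The key fact is that $I_g\imath_{\cW}$ has finite-dimensional range. Fix an orthonormal basis $f_1,\dots,f_m$ of $\cW$, where $m=\dim\cW$, and set $h_j := I_g f_j\in L^2_\mu(\p_+SM)$.

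For $f=\sum_j c_j f_j\in S(\cW)$ we have $\sum_j|c_j|^2=1$. Linearity and Cauchy--Schwarz then give
$$
\|(\Id-P_\rho^{\p_+SM})I_g f\|_{L^2_\mu(\p_+SM)}\le \sum_{j=1}^m |c_j|\,\|(\Id-P_\rho^{\p_+SM})h_j\|_{L^2_\mu(\p_+SM)}\le \sqrt{m}\,\max_{1\le j\le m}\|(\Id-P_\rho^{\p_+SM})h_j\|_{L^2_\mu(\p_+SM)}.
$$
By homogeneity, the operator norm on $\cW$ is bounded by the same quantity.

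Next, apply Proposition~\ref{proposition: P_rho converges to Id strongly} to each $h_j$ with tolerance $\varepsilon/(2\sqrt m)$. This gives thresholds $\rho_j>0$, and we take $\rho_0:=\min_j\rho_j>0$. Taking the minimum is legitimate because there are only finitely many $j$. For any mesh with $\rho\le\rho_0$, each term is then below $\varepsilon/(2\sqrt m)$, so the operator norm is at most $\varepsilon/2<\varepsilon$. Note that the threshold in Proposition~\ref{proposition: P_rho converges to Id strongly} holds uniformly over all meshes of size at most $\rho_0$, not just a sequence. This uniformity is exactly what the statement requires.

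There is little real obstacle here. The only point needing care is that strong convergence alone does not give norm convergence. The finite dimensionality of $\cW$ is what bridges the gap. An alternative route uses the compactness of $S(\cW)$ together with an $\varepsilon$-net and the uniform bound $\|\Id-P_\rho^{\p_+SM}\|\le 1$ from Proposition~\ref{proposition: projection properties}(b) and \eqref{equation: explicit bound for operator norm for I_g acting on L^2 functions}. The basis argument above is the more direct of the two.
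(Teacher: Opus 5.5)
Your argument is correct, but it takes a different route from the paper. The paper uses exactly the ``alternative route'' you mention at the end.

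\textbf{The paper's argument.} It covers the compact sphere $S(\cW)$ by finitely many $\eta$-balls centred at $f_1,\dots,f_m\in S(\cW)$, with $\eta\sqrt{\diam(M)\,\omega_{n-1}}<\varepsilon/2$. It applies Proposition~\ref{proposition: P_rho converges to Id strongly} to each $I_g f_i$ with tolerance $\varepsilon/2$. It then handles an arbitrary $f\in S(\cW)$ by the triangle inequality, absorbing $f-f_i$ via $\|\Id-P_\rho^{\p_+SM}\|\le 1$ and the bound \eqref{equation: explicit bound for operator norm for I_g acting on L^2 functions} on $\|I_g\|$.

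\textbf{What your route buys.} Your basis decomposition relies on linearity rather than equicontinuity. It needs only $\dim\cW$ applications of the strong-convergence result, instead of a covering number of $S(\cW)$, which can grow exponentially in $\dim\cW$. It needs no quantitative bound on $\|I_g\|$ or $\|\Id-P_\rho^{\p_+SM}\|$. Through Cauchy--Schwarz it also gives the explicit estimate $\|(\Id-P_\rho^{\p_+SM})I_g\imath_{\cW}\|\le\big(\sum_j\|(\Id-P_\rho^{\p_+SM})I_g f_j\|^2\big)^{1/2}$.

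\textbf{What the paper's route buys.} The net argument is the standard principle that strong convergence of uniformly bounded operators is uniform on compact sets. It therefore applies verbatim to any compact subset of $L^2(M)$, not only to the unit sphere of a finite-dimensional subspace. It also reuses the compactness of $S(\cW)$ that drives Proposition~\ref{proposition: stability of I_g over W}.
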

\begin{proof}
    By the compactness of $S(\cW)$, there are $f_1,\dots, f_m \in S(\cW)$ such that open balls in $\cW$ of radius $\eta$ and centered at $f_i$, $i = 1, \dots, m$ cover $S(\cW)$, where $\eta > 0$ is chosen so that
    $$
        \eta\, \sqrt{\diam(M)\omega_{n - 1}} < \frac{\varepsilon}{2}.
    $$
    For each $i = 1,\dots, m$, we use Proposition~\ref{proposition: P_rho converges to Id strongly} to find $\rho_i > 0$ such that
    $$
        \|(\Id - P_\rho^{\p_+ SM})I_g \imath_{\cW}(f_i)\|_{L^2_\mu(\p_+ SM)} < \frac{\varepsilon}{2}
    $$
    for any mesh $\cT_\rho(\p_+ SM)$ with $\rho \le \rho_i$. We then define $\rho_0 := \min_{i = 1,\dots, m}\rho_i$.\medskip

    Now, choose an arbitrary mesh $\cT_\rho(\p_+ SM)$ with $\rho \le \rho_0$. For $f\in S(\cW)$, take $i\in \{1, \dots, m\}$ such that $\|f - f_i\|_{L^2(M)} < \eta$. Then
    $$
        \begin{aligned}
            \|(\Id - P_\rho^{\p_+ SM})&I_g \imath_{\cW}(f)\|_{L^2_\mu(\p_+ SM)} \\
            &\le \|(\Id - P_\rho^{\p_+ SM})I_g \imath_{\cW}(f - f_i)\|_{L^2_\mu(\p_+ SM)} + \|(\Id - P_\rho^{\p_+ SM})I_g \imath_{\cW}(f_i)\|_{L^2_\mu(\p_+ SM)} \\
            &\le \sqrt{\diam(M)\omega_{n-1}}\,\eta + \varepsilon/2 < \varepsilon,
        \end{aligned}
    $$
    where we have used Proposition~\ref{proposition: projection properties}(b) and \eqref{equation: explicit bound for operator norm for I_g acting on L^2 functions}. Taking the supremum over all $f\in S(\cW)$ completes the proof.
\end{proof}

\begin{proof}[Proof of Theorem~\ref{theorem: main stability}]
    By Proposition~\ref{proposition: operator norm (Id - P_rho)I_g is small if rho is small enough}, there is $\rho_0 > 0$ such that
    \begin{equation}\label{equation: sufficient operator norm (Id - P_rho)I_g bound}
        \|(\Id - P_\rho^{\p_+ SM}) I_g \imath_{\cW}\|_{\cW \to L^2_\mu(\p_+ SM)} \le \frac{1}{2C_{g, \cW}}
    \end{equation}
    for any mesh $\cT_\rho(\p_+ SM)$ with $\rho\le \rho_0$. Suppose $f\in L^2(M)$ and $h\in \cW$. Then by Proposition~\ref{proposition: stability of I_g over W},
    $$
        \begin{aligned}
            \|h\|_{L^2(M)}^2 &\le C_{g, \cW}^2 \|I_g \imath_{\cW}(h)\|_{L^2_\mu(\p_+ SM)}^2 \\
            &= C_{g, \cW}^2 \|P_\rho^{\p_+ SM} I_g \imath_{\cW}(h)\|_{L^2_\mu(\p_+ SM)}^2 + C_{g, \cW}^2 \|(\Id - P_\rho^{\p_+ SM}) I_g \imath_{\cW}(h)\|_{L^2_\mu(\p_+ SM)}^2 \\
            &\le C_{g, \cW}^2 \|P_\rho^{\p_+ SM} I_g \imath_{\cW}(h)\|_{L^2_\mu(\p_+ SM)}^2 + C_{g, \cW}^2 \|(\Id - P_\rho^{\p_+ SM}) I_g \imath_{\cW}\|_{\cW \to L^2_\mu(\p_+ SM)}^2 \|h\|_{L^2(M)}^2.
        \end{aligned}
    $$
    Substituting \eqref{equation: sufficient operator norm (Id - P_rho)I_g bound} into this estimate, we get
    $$
        \begin{aligned}
            \|h\|_{L^2(M)} &\le \frac{2}{\sqrt{3}} C_{g, \cW}\|P_\rho^{\p_+ SM} I_g \imath_{\cW}(h)\|_{L^2_\mu(\p_+ SM)} \\
            &\le \frac{2}{\sqrt{3}} C_{g, \cW}\|P_\rho^{\p_+ SM} I_g f\|_{L^2_\mu(\p_+ SM)} + \frac{2}{\sqrt{3}} C_{g, \cW}\|P_\rho^{\p_+ SM} I_g (f - \imath_{\cW} h)\|_{L^2_\mu(\p_+ SM)} \\
            &\le \frac{2}{\sqrt{3}} C_{g, \cW}\|P_\rho^{\p_+ SM} I_g f\|_{L^2_\mu(\p_+ SM)} + \frac{2}{\sqrt{3}} C_{g, \cW}\sqrt{\diam(M)\,\omega_{n-1}}\,\|f - h\|_{L^2(M)}
        \end{aligned}
    $$
    where we have used Proposition~\ref{proposition: projection properties}(b) and \eqref{equation: explicit bound for operator norm for I_g acting on L^2 functions}. Therefore,
    $$
        \begin{aligned}
            \|f\|_{L^2(M)} &\le \|h\|_{L^2(M)} + \|f - h\|_{L^2(M)} \\
                &\le \frac{2}{\sqrt{3}} C_{g, \cW}\|P_\rho^{\p_+ SM} I_g f\|_{L^2_\mu(\p_+ SM)} + \bigg(1 + \frac{2}{\sqrt{3}} C_{g, \cW}\sqrt{\diam(M)\,\omega_{n-1}}\,\bigg)\|f - h\|_{L^2(M)}
        \end{aligned}.
    $$
    Taking $h = P_{\cW} f$, we complete the proof.
\end{proof}

\section{Reconstruction Algorithms}\label{section: reconstructions}
The stability estimate derived in the previous section will be used to develop iterative algorithms for reconstructing $f^\dagger\in\cW$ from the data $P_\rho^{\p_+ SM}I_g\imath_{\cW}(f^\dagger)$. For the numerical realization of these algorithms, we will specialize $\cW$ to a piecewise constant finite element subspace of $L^2(M)$, defined below. The convergence proofs in Theorems~\ref{theorem: steepest gd} and~\ref{theorem: cgm} use standard arguments from numerical linear algebra. We include them to clarify how these arguments apply in the present inverse problem setting.

\subsection{$\cW$ as a piecewise constant FES on $M$}
Let $\cT_\delta(M)$ be a mesh, i.e. a finite collection of closed subsets of $M$ with non-empty interiors, satisfying
$$
    M=\bigcup_{F\in\cT_\delta(M)}F,\qquad
    \Vol_g(F\cap G)=0\quad\text{for distinct }F,G\in\cT_\delta(M),\qquad
    \max_{F\in\cT_\delta(M)}\diam_M(F)\le\delta.
$$
The induced piecewise constant FES is
$$
    V_\delta(M):=\span\{\phi_F:F\in\cT_\delta(M)\}\subset L^2(M),
    \qquad \phi_F:=\frac{\mathbf 1_F}{\Vol_g(F)^{1/2}}.
$$
The functions $\{\phi_F\}_{F\in\cT_\delta(M)}$ form an orthonormal basis of $V_\delta(M)$, and $V_\delta(M)\subset L^\infty(M)$ is finite-dimensional.

The orthogonal projection $P_\delta^M:L^2(M)\to V_\delta(M)$ is given by
$$
    P_\delta^M f
    =\sum_{F\in\cT_\delta(M)}(f,\phi_F)_{L^2(M)}\phi_F
    =\sum_{F\in\cT_\delta(M)}\frac{1}{\Vol_g(F)}
    \bigg(\int_F f(x)\,d\Vol_g(x)\bigg)\mathbf 1_F.
$$
As in Proposition~\ref{proposition: projection properties}(a), we can show that
$$
    (P_\delta^M)^*=\imath_\delta^M,\qquad
    (\imath_\delta^M)^*=P_\delta^M,
$$
where $\imath_\delta^M: V_\delta(M)\inclusion L^2(M)$ is the inclusion operator.

For the remainder of this paper, we take $\cW=V_\delta(M)$ in the results of Section~\ref{section: stability estimates}. Accordingly, $P_{\cW}=P_\delta^M$ and $\imath_{\cW}=\imath_\delta^M$.

\subsection{Matrix representations}\label{subsection: matrix representations}
Let $\{\psi_E\}_{E\in \cT_\rho(\p_+ SM)}$ denote the orthonormal basis of $V_\rho(\p_+ SM)$ defined in \eqref{equation: orthonormal basis of FES}. In what follows, with some abuse of notation, we will use the cells of $\cT_\delta(M)$ and $\cT_\rho(\p_+ SM)$ for indexing purposes as well. For example, $f\in V_\delta(M)$ can be represented as $f = \sum_{F\in \cT_\delta(M)} f_F \phi_F$. Applying $P_\rho^{\p_+ SM}I_g\imath_\delta^M$ to $f$, we can write
$$
    P_\rho^{\p_+ SM}I_g\imath_\delta^M(f)(x, v) = \sum_{E\in \cT_\rho(\p_+ SM)} \sum_{F\in \cT_\delta(M)} f_F \big(I_g\imath_\delta^M(\phi_F), \psi_E\big)_{L^2_\mu(\p_+ SM)} \psi_E.
$$
Thus, the map $P_\rho^{\p_+ SM}I_g\imath_\delta^M: V_\delta(M) \to V_\rho(\p_+ SM)$ is represented by the matrix $A_{\rho, \delta}\in\R^{N\times K}$ whose entries are
$$
    (A_{\rho, \delta})_{E, F} = \big(I_g\imath_\delta^M(\phi_F), \psi_E \big)_{L^2_\mu(\p_+ SM)},\qquad E\in \cT_\rho(\p_+ SM), \quad F\in \cT_\delta(M),
$$
where $K = |\cT_\delta(M)|$ and $N = |\cT_\rho(\p_+ SM)|$. In what follows, we also use the notation
$$
    x_f:=(f_F)_{F\in\cT_\delta(M)}\in\R^K
$$
for the coordinate vector of $f\in V_\delta(M)$. Since the two bases are orthonormal,
\begin{equation}\label{equation: coordinate isometries}
    \|f\|_{L^2(M)} = \|x_f\|_{\R^K}, \qquad \|P_\rho^{\p_+ SM}I_g\imath_\delta^M(f)\|_{L^2_\mu(\p_+ SM)} = \|A_{\rho, \delta}\, x_f\|_{\R^N}.
\end{equation}
Moreover, the matrix representation of $(P_\rho^{\p_+ SM}I_g\imath_\delta^M)^*$ in the reversed pair of bases is $A_{\rho, \delta}^T$. Indeed,
$$
    \big((P_\rho^{\p_+ SM}I_g\imath_\delta^M)^*\psi_E,\phi_F\big)_{L^2(M)} = \big(\psi_E, I_g\imath_\delta^M(\phi_F)\big)_{L^2_\mu(\p_+ SM)} = (A_{\rho, \delta})_{EF}.
$$
Consequently, the matrix representation of
$(P_\rho^{\p_+ SM}I_g\imath_\delta^M)^*
P_\rho^{\p_+ SM}I_g\imath_\delta^M$ is $A_{\rho, \delta}^T A_{\rho, \delta}$.

\subsection{The steepest gradient descent}\label{subsection: steepest gd}
For $f^\dagger\in V_\delta(M)$, write $y = P_\rho^{\p_+ SM} I_g\imath_\delta^M(f^\dagger)$ and suppose $f_0\in V_\delta(M)$. Our goal is to reconstruct $f^\dagger$. Consider the following iteration
\begin{equation}\label{equation: steepest gd iteration}
    \begin{aligned}
        r_k &:= (P_\rho^{\p_+ SM} I_g \imath_\delta^M)^*\big(P_\rho^{\p_+ SM} I_g \imath_\delta^M(f_k) - y\big),\\
        \eta_k &:= \frac{\|r_k\|_{L^2(M)}^2}{\|P_\rho^{\p_+ SM} I_g \imath_\delta^M (r_k)\|_{L^2_\mu(\p_+ SM)}^2},\\
        f_{k+1} &:= f_k - \eta_k r_k
    \end{aligned}
\end{equation}
for all integers $k\ge 0$ for which $r_k\ne0$. If $r_k=0$, the iteration is terminated. Note that if $f_k\in V_\delta(M)$, then it is guaranteed $r_k, f_{k+1} \in V_\delta(M)$. Indeed, by the mapping properties of $I_g$, $P_\rho^{\p_+ SM}$, and $\imath_\delta^M$, we have
$$
P_\rho^{\p_+ SM}I_g\imath_\delta^M: V_\delta(M) \to V_\rho(\p_+ SM)
$$
and by Proposition~\ref{proposition: projection properties}(a) and the mapping properties of $I_g$, $P_\delta^M$, and $\imath_\rho^{\p_+ SM}$, we have
$$
(P_\rho^{\p_+ SM} I_g\imath_\delta^M)^* = (\imath_\delta^M)^* I_g^* (P_\rho^{\p_+ SM})^* = P_\delta^M I_g^* \imath_\rho^{\p_+ SM}: V_\rho(\p_+ SM) \to V_\delta(M).
$$
The iteration \eqref{equation: steepest gd iteration} is the steepest gradient descent for the following optimization problem
\begin{equation}\label{equation: optimization problem}
    \min_{f\in V_\delta(M)}\frac{1}{2}\|P_\rho^{\p_+ SM} I_g\imath_\delta^M(f) - y\|_{L^2_\mu(\p_+ SM)}^2.
\end{equation}
Our following result allows us to reconstruct $f^\dagger$ from $P_\rho^{\p_+ SM} I_g f^\dagger$.
\begin{Theorem}\label{theorem: steepest gd}
    Let $(M, g)$ be a compact non-trapping Riemannian manifold with convex boundary and satisfying Assumption~\ref{assumption: simplicity and strict convex admittance of M}. There exists $\rho_0 > 0$ such that if $\cT_\rho(\p_+ SM)$ is a mesh with $\rho \le \rho_0$, and if $f_0\in V_\delta(M)$, then the iteration \eqref{equation: steepest gd iteration} converges at the rate given by
    $$
        \|f_k - f^\dagger\|_{L^2(M)}
        \le 2 C_{g, V_\delta(M)}\sqrt{\frac{\diam(M)\,\omega_{n-1}}{3}}\bigg(\frac{\sigma_{\max}^2 - \sigma_{\min}^2}
        {\sigma_{\max}^2 + \sigma_{\min}^2}\bigg)^k
        \|f_0 - f^\dagger\|_{L^2(M)}
    $$
    for all integers $k\ge 0$, where $\sigma_{\min} > 0$ and $\sigma_{\max}$ are the smallest and largest singular values of the matrix representation of $P_\rho^{\p_+ SM}I_g\imath_\delta^M$. If $r_k = 0$ for some $k$, then
    $f_k = f^\dagger$ and the iteration terminates.
\end{Theorem}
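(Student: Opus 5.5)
We work entirely in coordinates. Take $\rho_0=\rho_0(g,V_\delta(M))$ from Corollary~\ref{corollary: main stability} with $\cW=V_\delta(M)$, and write $A=A_{\rho,\delta}$, $H=A^TA$, $e_k=x_{f_k}-x_{f^\dagger}$. By \eqref{equation: coordinate isometries} and the matrix representations of Subsection~\ref{subsection: matrix representations}, the iteration \eqref{equation: steepest gd iteration} becomes the classical steepest descent for the quadratic $\phi(x)=\frac12\|Ax-x_y\|^2$:
\[
x_{r_k}=He_k,\qquad \eta_k=\frac{\|He_k\|^2}{\|AHe_k\|^2}=\frac{x_{r_k}^Tx_{r_k}}{x_{r_k}^THx_{r_k}},\qquad e_{k+1}=e_k-\eta_k He_k .
\]
Corollary~\ref{corollary: main stability} gives $\|x\|\le \frac{2}{\sqrt3}C_{g,V_\delta(M)}\|Ax\|$ for all $x$. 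Hence $A$ has trivial kernel, so $H$ is symmetric positive definite with $\sigma_{\min}(A)\ge \frac{\sqrt3}{2C_{g,V_\delta(M)}}>0$. In particular, $r_k=0$ forces $He_k=0$, hence $e_k=0$ and $f_k=f^\dagger$, which is the termination claim. Likewise $r_k\neq0$ guarantees $AHe_k\neq 0$, so $\eta_k$ is well defined.

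For the rate, we use the $H$-energy norm $\|e\|_H^2=e^THe=\|Ae\|^2$. By Kantorovich's inequality, the exact line search step gives
\[
\|e_{k+1}\|_H\le \frac{\kappa-1}{\kappa+1}\|e_k\|_H,\qquad \kappa=\sigma_{\max}^2/\sigma_{\min}^2 .
\]
Concretely, one shows $\|e_{k+1}\|_H^2=\big(1-\frac{(r^Tr)^2}{(r^THr)(r^TH^{-1}r)}\big)\|e_k\|_H^2$ with $r=He_k$, and then bounds the fraction below by $4\lambda_{\min}\lambda_{\max}/(\lambda_{\min}+\lambda_{\max})^2$. This yields the factor $\frac{\sigma_{\max}^2-\sigma_{\min}^2}{\sigma_{\max}^2+\sigma_{\min}^2}$, and iterating gives $\|Ae_k\|\le q^k\|Ae_0\|$.

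Finally we convert back to $L^2$ norms. The lower bound comes from Corollary~\ref{corollary: main stability}: $\|e_k\|\le\frac{2}{\sqrt3}C_{g,V_\delta(M)}\|Ae_k\|$. The upper bound uses Proposition~\ref{proposition: projection properties}(b) together with \eqref{equation: explicit bound for operator norm for I_g acting on L^2 functions}: $\|Ae_0\|=\|P_\rho^{\p_+SM}I_g\imath_\delta^M(f_0-f^\dagger)\|\le\sqrt{\diam(M)\omega_{n-1}}\,\|f_0-f^\dagger\|$. Chaining these gives exactly the constant $2C_{g,V_\delta(M)}\sqrt{\diam(M)\omega_{n-1}/3}$ in the statement.

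The main obstacle is the Kantorovich step. It is standard, but it has to be stated cleanly in the energy norm: the contraction holds for $\|A e_k\|$, not for $\|e_k\|$. That is precisely why the stability constant and the operator bound enter when passing between the two norms.
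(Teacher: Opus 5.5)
Your proposal is correct and follows essentially the same route as the paper. Both take $\rho_0$ from Corollary~\ref{corollary: main stability}, derive the exact energy-norm decrease $\|Ae_{k+1}\|^2=\|Ae_k\|^2-\|r_k\|^4/\|Ar_k\|^2$, bound the ratio with Kantorovich's inequality to get contraction of $\|Ae_k\|$, and convert back to $L^2$ norms using the stability estimate and the bound $\sqrt{\diam(M)\,\omega_{n-1}}$. The only difference is cosmetic: the paper derives the decrease identity at the operator level via the adjoint before passing to coordinates, whereas you work in coordinates throughout.
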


\begin{proof}
    Choose $\rho_0>0$ as in Corollary~\ref{corollary: main stability}. For $\rho\le\rho_0$, the operator $P_\rho^{\p_+ SM}I_g\imath_\delta^M$ is injective, so $A_{\rho, \delta}$ is also injective and $\sigma_{\min} > 0$. Moreover, $A_{\rho, \delta}^T A_{\rho, \delta}$ is symmetric positive definite, and by \cite[Theorem~7.3.5]{HoJo1985}, its smallest and largest eigenvalues are $\sigma_{\min}^2$ and $\sigma_{\max}^2$, respectively.

    Let us write $e_k:=f_k-f^\dagger$. Since $y = P_\rho^{\p_+ SM}I_g\imath_\delta^M(f^\dagger)$, we have
    $$
        r_k=(P_\rho^{\p_+ SM}I_g\imath_\delta^M)^* P_\rho^{\p_+ SM}I_g\imath_\delta^M(e_k).
    $$
    Suppose $r_k = 0$. Injectivity of $P_\rho^{\p_+ SM}I_g\imath_\delta^M$ implies the injectivity of $(P_\rho^{\p_+ SM}I_g\imath_\delta^M)^* P_\rho^{\p_+ SM}I_g\imath_\delta^M$. Hence, $f_k = f^\dagger$. We may therefore assume that $r_k \ne 0$. In that case, the estimate in Corollary~\ref{corollary: main stability}
    shows that $\|P_\rho^{\p_+ SM}I_g\imath_\delta^M(r_k)\|_{L^2_\mu(\p_+ SM)} > 0$, so $\eta_k$ is
    well-defined.

    Subtracting $f^\dagger$ from the last equation in
    \eqref{equation: steepest gd iteration} gives
    $e_{k+1} = e_k - \eta_k r_k$. Moreover,
    $$
        \begin{aligned}
            \big(P_\rho^{\p_+ SM}I_g\imath_\delta^M(e_k),\, P_\rho^{\p_+ SM}I_g\imath_\delta^M(r_k)&\big)_{L^2_\mu(\p_+ SM)} \\
            &= \big((P_\rho^{\p_+ SM}I_g\imath_\delta^M)^*P_\rho^{\p_+ SM}I_g\imath_\delta^M(e_k),r_k\big)_{L^2(M)} = \|r_k\|_{L^2(M)}^2.
        \end{aligned}
    $$
    It follows from the definition of $\eta_k$ that
    \begin{equation}\label{equation: steepest gd exact decrease}
        \begin{aligned}
            \|P_\rho^{\p_+ SM} &I_g \imath_\delta^M (e_{k+1})\|_{L^2_\mu(\p_+ SM)}^2 \\
            &= \|P_\rho^{\p_+ SM} I_g \imath_\delta^M (e_k)\|_{L^2_\mu(\p_+ SM)}^2 + \eta_k^2 \|P_\rho^{\p_+ SM} I_g \imath_\delta^M (r_k)\|_{L^2_\mu(\p_+ SM)}^2 - 2\eta_k \|r_k\|_{L^2(M)}^2 \\
            &= \|P_\rho^{\p_+ SM} I_g \imath_\delta^M (e_k)\|_{L^2_\mu(\p_+ SM)}^2 - \frac{\|r_k\|_{L^2(M)}^4}{\|P_\rho^{\p_+ SM} I_g \imath_\delta^M (r_k)\|_{L^2_\mu(\p_+ SM)}^2}.
        \end{aligned}
    \end{equation}
    By the matrix representation properties established above, $x_{r_k} = A_{\rho, \delta}^T A_{\rho, \delta} x_{e_k}$. Then \eqref{equation: coordinate isometries} yields
    $$
        \|r_k\|_{L^2(M)}^2 = (x_{r_k}, x_{r_k})_{\R^K}, \qquad \|P_\rho^{\p_+ SM} I_g \imath_\delta^M(r_k)\|_{L^2_\mu(\p_+ SM)}^2 = (A_{\rho, \delta}^T A_{\rho, \delta} \, x_{r_k}, x_{r_k})_{\R^K},
    $$
    and, since $x_{e_k} = (A_{\rho, \delta}^T A_{\rho, \delta})^{-1} x_{r_k}$,
    $$
        \|P_\rho^{\p_+ SM} I_g \imath_\delta^M(e_k)\|_{L^2_\mu(\p_+ SM)}^2 = (A_{\rho, \delta}^T A_{\rho, \delta} x_{e_k}, x_{e_k})_{\R^K} = \big((A_{\rho, \delta}^T A_{\rho, \delta})^{-1} x_{r_k}, x_{r_k}\big)_{\R^K}.
    $$
    Applying \cite[Theorem~7.4.41]{HoJo1985} gives
    $$
        (A_{\rho, \delta}^T A_{\rho, \delta} \, x_{r_k}, x_{r_k})_{\R^K} \big((A_{\rho, \delta}^T A_{\rho, \delta})^{-1} x_{r_k}, x_{r_k}\big)_{\R^K} \le \frac{(\sigma_{\max}^2 + \sigma_{\min}^2)^2}{4\sigma_{\max}^2 \sigma_{\min}^2}\|x_{r_k}\|_{\R^K}^4.
    $$
    Therefore,
    $$
        \frac{\|r_k\|_{L^2(M)}^4}
        {\|P_\rho^{\p_+ SM} I_g \imath_\delta^M (r_k)\|_{L^2_\mu(\p_+ SM)}^2
         \|P_\rho^{\p_+ SM} I_g \imath_\delta^M (e_k)\|_{L^2_\mu(\p_+ SM)}^2}
        \ge \frac{4\sigma_{\max}^2\sigma_{\min}^2}{(\sigma_{\max}^2+\sigma_{\min}^2)^2}.
    $$
    Combining this estimate with
    \eqref{equation: steepest gd exact decrease}, we obtain
    $$
        \begin{aligned}
            \|P_\rho^{\p_+ SM} I_g \imath_\delta^M (e_{k+1})\|_{L^2_\mu(\p_+ SM)}^2 &\le \bigg(1-\frac{4\sigma_{\max}^2\sigma_{\min}^2}{(\sigma_{\max}^2+\sigma_{\min}^2)^2}\bigg) \|P_\rho^{\p_+ SM} I_g \imath_\delta^M(e_k)\|_{L^2_\mu(\p_+ SM)}^2 \\
            &= \bigg(\frac{\sigma_{\max}^2 - \sigma_{\min}^2}{\sigma_{\max}^2 + \sigma_{\min}^2}\bigg)^2 \|P_\rho^{\p_+ SM} I_g \imath_\delta^M(e_k)\|_{L^2_\mu(\p_+ SM)}^2.
        \end{aligned}
    $$
    Iterating this we get
    $$
        \|P_\rho^{\p_+ SM}I_g\imath_\delta^M(f_k-f^\dagger)\|_{L^2_\mu(\p_+ SM)}
        \le \bigg(\frac{\sigma_{\max}^2 - \sigma_{\min}^2}
        {\sigma_{\max}^2 + \sigma_{\min}^2}\bigg)^k
        \|P_\rho^{\p_+ SM}I_g\imath_\delta^M(f_0-f^\dagger)\|_{L^2_\mu(\p_+ SM)}.
    $$
    Finally, applying the estimate in Corollary~\ref{corollary: main stability} to the left side of the above estimate and \eqref{equation: explicit bound for operator norm for I_g acting on L^2 functions} together with Proposition~\ref{proposition: projection properties}(b) to the right side, we get the statement of the theorem.
\end{proof}

The reconstruction algorithm is summarized in the Algorithm~\ref{algorithm: steepest gradient descent} pseudo-code.

\begin{algorithm}[htbp]
    \caption{Steepest gradient descent}
    \label{algorithm: steepest gradient descent}
    \begin{algorithmic}[1]
        \Require $P_\rho^{\p_+ SM}I_g\imath_\delta^M:V_\delta(M)\to V_\rho(\p_+ SM)$, data $y$, initial iterate $f_0\in V_\delta(M)$, tolerance $\varepsilon \ge 0$
        \State $k\gets 0$
        \While{$\|P_\rho^{\p_+ SM}I_g\imath_\delta^M(f_k) - y\|_{L^2_\mu(\p_+ SM)} > \varepsilon$}
            \State $r_k\gets (P_\rho^{\p_+ SM}I_g\imath_\delta^M)^*\big(P_\rho^{\p_+ SM}I_g\imath_\delta^M(f_k) - y\big)$
            \If{$r_k=0$}
                \State \textbf{break}
            \EndIf
            \State $\displaystyle\eta_k\gets \frac{\|r_k\|_{L^2(M)}^2}{\|P_\rho^{\p_+ SM}I_g\imath_\delta^M(r_k)\|_{L^2_\mu(\p_+ SM)}^2}$
            \State $f_{k+1}\gets f_k - \eta_k r_k$
            \State $k\gets k + 1$
        \EndWhile\\
        $\hat f\gets f_k$
    \end{algorithmic}
\end{algorithm}

\subsection{The conjugate gradient method}\label{subsection: cgm}
For $f^\dagger\in V_\delta(M)$, write $y = P_\rho^{\p_+ SM} I_g\imath_\delta^M(f^\dagger)$ and suppose $f_0\in V_\delta(M)$. Consider the following iteration
\begin{equation}\label{equation: cgm iteration}
    \begin{aligned}
        r_k &:= (P_\rho^{\p_+ SM} I_g \imath_\delta^M)^*\big(P_\rho^{\p_+ SM} I_g \imath_\delta^M(f_k) - y\big),\\
        d_k &:= r_k - \sum_{j = 0}^{k - 1}\frac{\big(P_\rho^{\p_+ SM} I_g \imath_\delta^M(d_j),\, P_\rho^{\p_+ SM} I_g \imath_\delta^M(r_k)\big)_{L^2_\mu(\p_+ SM)}}{\|P_\rho^{\p_+ SM} I_g \imath_\delta^M(d_j)\|_{L^2_\mu(\p_+ SM)}^2}d_j,\\
        \alpha_k &:= \frac{(r_k,d_k)_{L^2(M)}}{\|P_\rho^{\p_+ SM} I_g \imath_\delta^M(d_k)\|_{L^2_\mu(\p_+ SM)}^2},\\
        f_{k+1} &:= f_k - \alpha_k d_k
    \end{aligned}
\end{equation}
for all integers $k\ge 0$ for which $r_k\ne0$. If $r_k=0$, the iteration is terminated. The sum defining $d_0$ is understood to be empty, so $d_0=r_0$. Note that if $f_k\in V_\delta(M)$, then, as in Section~\ref{subsection: steepest gd}, it is guaranteed that $r_k,d_k,f_{k+1}\in V_\delta(M)$. The iteration \eqref{equation: cgm iteration} is the conjugate gradient method applied to the optimization problem \eqref{equation: optimization problem}. Our following result allows us to reconstruct $f^\dagger$ from $P_\rho^{\p_+ SM} I_g f^\dagger$.

\begin{Theorem}\label{theorem: cgm}
    Let $(M,g)$ be a compact non-trapping Riemannian manifold with convex
    boundary and satisfying Assumption~\ref{assumption: simplicity and strict convex admittance of M}.
    There exists $\rho_0>0$ such that if $\cT_\rho(\p_+ SM)$ is a mesh
    with $\rho\le\rho_0$ and $f_0\in V_\delta(M)$, then, in exact
    arithmetic, the iteration \eqref{equation: cgm iteration} reaches
    $f^\dagger$ after at most $K = \dim V_\delta(M)$ iterations. Namely, $d_k=0$ for some $k \le K$, in which case $f_k=f^\dagger$ and the iteration terminates. Moreover, the same convergence bound holds as in Theorem~\ref{theorem: steepest gd}.
\end{Theorem}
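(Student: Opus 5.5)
Write $T:=P_\rho^{\p_+ SM}I_g\imath_\delta^M$ and $H:=T^*T$, which is represented by $A_{\rho,\delta}^TA_{\rho,\delta}$. We take $\rho_0$ from Corollary~\ref{corollary: main stability} with $\cW=V_\delta(M)$, exactly as in the proof of Theorem~\ref{theorem: steepest gd}. Then $T$ is injective and $H$ is symmetric positive definite on $V_\delta(M)$. Hence the energy inner product $\langle u,w\rangle_H:=(Tu,Tw)_{L^2_\mu(\p_+ SM)}$ is an inner product on $V_\delta(M)$.

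With $e_k:=f_k-f^\dagger$ we have $r_k=He_k$. The direction $d_k$ in \eqref{equation: cgm iteration} is precisely the Gram--Schmidt orthogonalization of $r_k$ against $d_0,\dots,d_{k-1}$ in $\langle\cdot,\cdot\rangle_H$. The coefficient $\alpha_k$ is the exact line-search step along $d_k$ for the quadratic functional \eqref{equation: optimization problem}, i.e. $\alpha_k=\langle e_k,d_k\rangle_H/\|d_k\|_H^2$ (using $(r_k,d_k)=\langle e_k,d_k\rangle_H$).

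The plan is to prove by induction on $k$ two properties:
\begin{enumerate}
\item[(i)] the nonzero directions $d_0,\dots,d_k$ are mutually $H$-orthogonal;
\item[(ii)] $\langle e_{k+1},d_j\rangle_H=0$ for all $j\le k$, equivalently $(r_{k+1},d_j)_{L^2(M)}=0$.
\end{enumerate}
Property (i) holds by construction, since the denominators $\|Td_j\|^2$ are positive once $d_j\neq0$, by injectivity of $T$. For (ii) with $j=k$, we compute $\langle e_k-\alpha_kd_k,d_k\rangle_H=0$ directly from the definition of $\alpha_k$. For $j<k$ we use (i) together with the induction hypothesis. Consequently $e_{k}$ is $H$-orthogonal to $\span\{d_0,\dots,d_{k-1}\}$, and $f_k$ minimizes the residual $\|T(f-f^\dagger)\|$ over $f_0+\span\{d_0,\dots,d_{k-1}\}$.

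Termination comes next. If $d_k=0$, then $r_k\in\span\{d_0,\dots,d_{k-1}\}$, so $(r_k,r_k)=\sum_j c_j(r_k,d_j)=0$ by (ii). Thus $r_k=0$, and injectivity of $H$ gives $f_k=f^\dagger$. Conversely, if $r_k=0$ then $f_k=f^\dagger$ and the iteration stops. If no $d_k$ vanishes for $k<K$, then $d_0,\dots,d_{K-1}$ are $K$ nonzero $H$-orthogonal vectors, so they form a basis of the $K$-dimensional space $V_\delta(M)$. In that case $e_K$ is $H$-orthogonal to everything, so $e_K=0$ and $d_K=r_K=0$. This yields termination at some $k\le K$. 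The only care needed here is bookkeeping of the degenerate case: a nonzero $r_k$ must never produce $d_k=0$, which is exactly what the argument above excludes.

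For the convergence bound, the key observation is that $f_k-\eta_kr_k$ (with $\eta_k$ the steepest-descent step from $f_k$) lies in $f_0+\span\{d_0,\dots,d_k\}$, because $r_k\in\span\{d_0,\dots,d_k\}$ by the definition of $d_k$. Since $f_{k+1}$ minimizes $\|T(\cdot-f^\dagger)\|$ over this affine space, we get $\|Te_{k+1}\|\le\|T(e_k-\eta_kr_k)\|$. The steepest-descent one-step estimate from the proof of Theorem~\ref{theorem: steepest gd}, based on the Kantorovich inequality, then gives
\[
\|Te_{k+1}\|\le\frac{\sigma_{\max}^2-\sigma_{\min}^2}{\sigma_{\max}^2+\sigma_{\min}^2}\|Te_k\|.
\]
Iterating this, applying Corollary~\ref{corollary: main stability} on the left, and using \eqref{equation: explicit bound for operator norm for I_g acting on L^2 functions} with Proposition~\ref{proposition: projection properties}(b) on the right reproduces the bound of Theorem~\ref{theorem: steepest gd}. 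I expect the main obstacle to be stating the minimization property over the affine Krylov-type space cleanly via (ii), since everything else is routine.
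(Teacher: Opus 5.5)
Your proof is correct, and the orthogonality and termination parts match the paper's. Both arguments rely on $H$-orthogonality of the $d_j$ by construction and on the induction giving $(r_k,d_j)_{L^2(M)}=0$ for $j<k$. Both then deduce $d_k=0\Rightarrow r_k=0\Rightarrow f_k=f^\dagger$ and close with a dimension count on nonzero $H$-orthogonal directions. The paper concludes that $d_K$ must vanish, while you conclude $e_K=0$ directly; this difference is immaterial.

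You differ from the paper in the convergence rate.

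\emph{The paper's route} uses no global optimality. From the induction it extracts two local facts:
\begin{itemize}
\item $(r_k,d_k)_{L^2(M)}=\|r_k\|_{L^2(M)}^2$;
\item $\|Td_k\|\le\|Tr_k\|$, because $r_k-d_k\in\span\{d_0,\dots,d_{k-1}\}$ is $H$-orthogonal to $d_k$.
\end{itemize}
It substitutes these into the exact one-step identity $\|Te_{k+1}\|^2=\|Te_k\|^2-\|r_k\|^4/\|Td_k\|^2$. This shows the decrease is at least $\|r_k\|^4/\|Tr_k\|^2$, which is exactly the steepest-descent decrease already fed into the Kantorovich inequality.

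\emph{Your route} first proves that $f_{k+1}$ minimizes $\|T(\cdot-f^\dagger)\|$ over $f_0+\span\{d_0,\dots,d_k\}$. You then compare $f_{k+1}$ with the trial point $f_k-\eta_k r_k$, which lies in that affine set.

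Both routes give the same contraction factor. The paper's is shorter and purely local, needing only the identities at step $k$. Yours requires the extra minimization step, which is easy from your property (ii), but it is more conceptual. It is also the natural starting point for the sharper rate $(\sigma_{\max}-\sigma_{\min})/(\sigma_{\max}+\sigma_{\min})$, up to a factor $2$. To get that rate you would additionally need to identify $\span\{d_0,\dots,d_{k-1}\}$ with the Krylov space generated by $r_0$ under $H$, which follows by a short induction.
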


\begin{proof}
    Choose $\rho_0>0$ as in Corollary~\ref{corollary: main stability}. For $\rho\le\rho_0$, the operator $P_\rho^{\p_+ SM}I_g\imath_\delta^M$ is injective, so $A_{\rho, \delta}$ is also injective and $\sigma_{\min} > 0$. Moreover, $A_{\rho, \delta}^T A_{\rho, \delta}$ is symmetric positive definite, and by \cite[Theorem~7.3.5]{HoJo1985}, its smallest and largest eigenvalues are $\sigma_{\min}^2$ and $\sigma_{\max}^2$, respectively.

    Let us write $e_k:=f_k-f^\dagger$. Since $y = P_\rho^{\p_+ SM}I_g\imath_\delta^M(f^\dagger)$, we have
    \begin{equation}\label{equation: cgm residual}
        r_k = (P_\rho^{\p_+ SM}I_g\imath_\delta^M)^* P_\rho^{\p_+ SM}I_g\imath_\delta^M(e_k).
    \end{equation}
    Consider the symmetric bilinear form $B: V_\delta(M)\times V_\delta(M) \to \R$ defined as
    $$
        B(f, h) := \big(P_\rho^{\p_+ SM}I_g\imath_\delta^M(f), P_\rho^{\p_+ SM}I_g\imath_\delta^M(h)\big)_{L^2_\mu(\p_+ SM)},\qquad f, h \in V_\delta(M).
    $$
    According to Corollary~\ref{corollary: main stability}, $B$ is strictly positive definite. We say that $f, h \in V_\delta(M)$ are \emph{$B$-orthogonal} if $B(f, h) = 0$. The second formula in \eqref{equation: cgm iteration} says precisely that each $d_k$ is obtained by $B$-orthogonalizing it against $d_0, \dots, d_{k-1}$. Thus, any nonzero $d_i$ and $d_j$ with $i\neq j$ are mutually $B$-orthogonal.

    Next, it is easy to check that if $d_0,\dots, d_k \in V_\delta(M)$ are nonzero and $B$-orthogonal, then they are linearly independent. Therefore, $k$ can be at most $K - 1$ so that $d_j \neq 0$ for all $j \le k$.

    Now we show by induction on $k \ge 0$ that
    \begin{equation}\label{equation: cgm gradient direction orthogonality}
        (r_k,d_j)_{L^2(M)}=0,\qquad 0\le j<k.
    \end{equation}
    There is nothing to check in the case $k=0$. Suppose the statement is true for $k - 1$. Since $e_k = e_{k-1} - \alpha_{k-1} d_{k-1}$, we have
    $r_k = r_{k-1} - \alpha_{k-1} (P_\rho^{\p_+ SM}I_g\imath_\delta^M)^* P_\rho^{\p_+ SM}I_g\imath_\delta^M(d_{k-1})$. The induction hypothesis for $j < k - 1$ and the mutual $B$-orthogonality of $\{d_0, \dots, d_{k - 1}\}$ give
    $$
        (r_k, d_j)_{L^2(M)} = (r_{k-1}, d_j)_{L^2(M)}-\alpha_{k-1} B(d_{k-1}, d_j) = 0,\qquad j < k - 1.
    $$
    For $j = k - 1$, the definition of $\alpha_{k-1}$ gives
    $$
        (r_k, d_{k-1})_{L^2(M)} = (r_{k-1}, d_{k-1})_{L^2(M)} - \alpha_{k-1} \|P_\rho^{\p_+ SM}I_g\imath_\delta^M(d_{k-1})\|_{L^2_\mu(\p_+ SM)}^2 = 0.
    $$
    This proves \eqref{equation: cgm gradient direction orthogonality}.

    Finally, suppose that $d_k = 0$. Then the second formula in \eqref{equation: cgm iteration} implies that $r_k$ belongs to $\span\{d_0, \dots, d_{k - 1}\}$, while \eqref{equation: cgm gradient direction orthogonality} says that it is also orthogonal to this space. Therefore, $r_k = 0$. Since the operator $(P_\rho^{\p_+ SM}I_g\imath_\delta^M)^* P_\rho^{\p_+ SM}I_g\imath_\delta^M$ is injective, \eqref{equation: cgm residual} implies $e_k=0$, and hence
    $f_k=f^\dagger$.
    
    It remains to estimate the convergence rate. By \eqref{equation: cgm gradient direction orthogonality} and the second formula in \eqref{equation: cgm iteration}, we have
    \begin{equation}\label{equation: r_k inner d_k is d_k norm squared}
        (r_k, d_k)_{L^2(M)} = \|r_k\|^2_{L^2(M)}.
    \end{equation}
    Moreover, the mutual $B$-orthogonality of $\{d_0, \dots, d_k\}$ and the second formula in \eqref{equation: cgm iteration} imply that $d_k$ is $B$-orthogonal to $r_k - d_k$. Therefore,
    \begin{equation}\label{equation: r_k norm is greater than d_k norm}
        \begin{aligned}
            \|P_\rho^{\p_+ SM}I_g\imath_\delta^M(r_k)\|^2_{L_\mu^2(\p_+ SM)} &= \|P_\rho^{\p_+ SM}I_g\imath_\delta^M(d_k)\|^2_{L_\mu^2(\p_+ SM)} + \|P_\rho^{\p_+ SM}I_g\imath_\delta^M(r_k - d_k)\|^2_{L_\mu^2(\p_+ SM)} \\
            &\ge \|P_\rho^{\p_+ SM}I_g\imath_\delta^M(d_k)\|^2_{L_\mu^2(\p_+ SM)}.
        \end{aligned}
    \end{equation}
    The rest of the proof is exactly the same as in the proof of Theorem~\ref{theorem: steepest gd} with $\eta_k$ replaced by $\alpha_k$ and using \eqref{equation: r_k inner d_k is d_k norm squared} and \eqref{equation: r_k norm is greater than d_k norm}. The proof is thus complete.
\end{proof}

The Algorithm~\ref{algorithm: conjugate gradient method} pseudo-code provides an explicit description of the reconstruction algorithm.

\begin{algorithm}
    \caption{Conjugate gradient method}
    \label{algorithm: conjugate gradient method}
    \begin{algorithmic}[1]
        \Require $P_\rho^{\p_+ SM}I_g\imath_\delta^M:V_\delta(M)\to V_\rho(\p_+ SM)$, data $y$, initial iterate $f_0\in V_\delta(M)$, tolerance $\varepsilon \ge 0$
        \State $k\gets0$
        \While{$\|P_\rho^{\p_+ SM}I_g\imath_\delta^M(f_k) - y\|_{L^2_\mu(\p_+ SM)}>\varepsilon$}
            \State $r_k\gets (P_\rho^{\p_+ SM}I_g\imath_\delta^M)^*\big(P_\rho^{\p_+ SM}I_g\imath_\delta^M(f_k) - y\big)$
            \If{$r_k = 0$}
                \State \textbf{break}
            \EndIf
            \State $d_k\gets r_k$
            \For{$j = 0,\ldots, k - 1$}
                \State $\displaystyle
                    c_{k,j}\gets \frac{\big(P_\rho^{\p_+ SM}I_g\imath_\delta^M(d_j), P_\rho^{\p_+ SM}I_g\imath_\delta^M(r_k)\big)_{L^2_\mu(\p_+ SM)}}{\|P_\rho^{\p_+ SM}I_g\imath_\delta^M(d_j)\|_{L^2_\mu(\p_+ SM)}^2}$
                \State $d_k\gets d_k - c_{k,j} d_j$
            \EndFor
            \If{$d_k = 0$}
                \State \textbf{break}
            \EndIf
            \State $\displaystyle\alpha_k\gets \frac{(r_k,d_k)_{L^2(M)}}
                {\|P_\rho^{\p_+ SM}I_g\imath_\delta^M(d_k)\|_{L^2_\mu(\p_+ SM)}^2}$
            \State $f_{k+1}\gets f_k - \alpha_k d_k$
            \State $k\gets k + 1$
        \EndWhile\\
        $\hat f\gets f_k$
    \end{algorithmic}
\end{algorithm}

\section{Numerical Experiments}\label{section: experiments}
All experiments and simulations in the present paper are implemented by the author in Python\footnote{The code is available at: \href{https://github.com/ya2405/inverse-problems}{\nolinkurl{https://github.com/ya2405/inverse-problems}}}, with the AI tool used as a coding assistant, as disclosed above. The author reviewed and validated all generated or suggested code and takes responsibility for the implementation and results.

\subsection{Experimental setup}\label{subsection: experimental setup}
We present reconstruction experiments on the Euclidean unit ball $M=\{x=(x_1,\dots,x_n)\in\R^n:|x|^2\le1\}$, with $n=2$ or $n=3$, endowed with a conformal metric $g(x)=e^{2\lambda(x)}|dx|^2$. In both dimensions, we consider the spherical stereographic, Poincar\'e, and focusing lens geometries, with logarithmic conformal factors
\begin{equation}\label{equation: lambda functions}
    \begin{aligned}
        \lambda_1(x) &= \log\bigg(\frac{2R_1^2}{|x|^2+R_1^2}\bigg),\qquad R_1=2.0,\\
        \lambda_2(x) &= \log\bigg(\frac{2R_2^2}{R_2^2-|x|^2}\bigg),\qquad R_2=1.1,\\
        \lambda_3(x) &= 0.6\exp\bigg(-\frac{(x_1-0.2)^2+\sum_{j=2}^n x_j^2}{2\sigma^2}\bigg),\qquad \sigma=0.25.
    \end{aligned}
\end{equation}
The volume form in these coordinates is $d\Vol_g=e^{n\lambda}dx_1\cdots dx_n$. The first two metrics have constant sectional curvatures $1/R_1^2$ and $-1/R_2^2$, respectively, and have no conjugate points. The third is a lens focusing away from the origin and has conjugate points. All three geometries are non-trapping on $M$ and have strictly convex boundaries.

\subsubsection*{Discretizations}
In each dimension, the mesh $\cT_\delta(M)$ is obtained from an equispaced Cartesian grid of $[-1,1]^n$ by retaining cells whose centers lie in $M$. The measurement mesh $\cT_\rho(\p_+ SM)$ is a uniform product grid in the angular coordinates specified below. Both the unknowns and the data are represented in the orthonormal bases
$$
    \phi_F=\frac{\mathbf 1_F}{\Vol_g(F)^{1/2}},\qquad F\in\cT_\delta(M),
    \qquad
    \psi_E=\frac{\mathbf 1_E}{\mu(E)^{1/2}},\qquad E\in\cT_\rho(\p_+ SM),
$$
as in Section~\ref{subsection: matrix representations}. Cell measures are evaluated by midpoint quadrature. In particular, with $x^F$ denoting the center of $F$,
$$
    \Vol_g(F)\approx e^{n\lambda(x^F)}\prod_{j=1}^n\Delta x_j,\qquad (x_f)_F=(f,\phi_F)_{L^2(M)}\approx f(x^F)\Vol_g(F)^{1/2}.
$$
The formulas for $\mu(E)$ and the grid sizes depend on the dimension and are given in the corresponding subsections.

\subsubsection*{Calculation of $P_\rho^{\p_+ SM}I_g \imath_\delta^M$}
The matrix representation $A_{\rho,\delta}$ of $P_\rho^{\p_+ SM}I_g\imath_\delta^M$ has entries
$$
    (A_{\rho,\delta})_{E,F} = \frac{1}{\mu(E)^{1/2}\Vol_g(F)^{1/2}} \int_E\int_0^{\tau(x,v)}\mathbf 1_F(\gamma_{x,v}(t))\,dt\,d\mu(x,v), \qquad E\in\cT_\rho(\p_+ SM), \quad F\in\cT_\delta(M).
$$
For each measurement cell $E\in \cT_\rho(\p_+ SM)$, let $(x^E, v^E)\in E$ be the initial state corresponding to its center in the chosen angular coordinates. The second-order Heun method, with a time step $\Delta t=0.01$, produces successive numerical points $x_{E,0},x_{E,1},\dots,x_{E,N_E}$ along $\gamma_{x^E, v^E}$. The length of the portion of this geodesic lying in $F$ is approximated by
$$
    \int_0^{\tau(x^E, v^E)}\mathbf 1_F(\gamma_{x^E, v^E}(t))\,dt
    \approx\Delta t\sum_{m=0}^{N_E}\mathbf 1_F(x_{E,m})
    =\Delta t\,n_{E,F},
$$
where $n_{E,F}$ is the number of sampled points lying in $F$. Midpoint quadrature over $E$ then gives the sparse matrix entries
$$
    (A_{\rho,\delta})_{E,F}
    \approx\frac{\mu(E)^{1/2}}{\Vol_g(F)^{1/2}}\,\Delta t\,n_{E,F}.
$$
Therefore,
$$
    (A_{\rho,\delta}x_f)_E \approx \sum_{F\in\cT_\delta(M)} \frac{\mu(E)^{1/2}}{\Vol_g(F)^{1/2}}\,\Delta t\,n_{E,F}(x_f)_F \approx \mu(E)^{1/2}\sum_{F\in\cT_\delta(M)}\Delta t\,n_{E,F}f(x^F).
$$

\subsubsection*{Reconstruction and visualization}
The ground-truth $f^\dagger$ is the projection of the corresponding \emph{modified Shepp-Logan/Toft} phantom onto $V_\delta(M)$, with coefficients evaluated using midpoint quadrature. We form the discrete data $y=A_{\rho,\delta}x_{f^\dagger}$ using the matrix approximation of $P_\rho^{\p_+ SM}I_g\imath_\delta^M$ described above. For visualization, the phantom and reconstruction coefficients are divided by $\Vol_g(F)^{1/2}$ to recover function values. Similarly, the displayed ray data use $\sum_{F\in\cT_\delta(M)}\Delta t\,n_{E,F}f^\dagger(x^F)$ rather than the coefficients in the measurement basis.

Starting from $f_0=0$, we apply Algorithm~\ref{algorithm: steepest gradient descent} (SGD) and Algorithm~\ref{algorithm: conjugate gradient method} (CGM). Both methods use the stopping rule
\begin{equation}\label{equation: stopping rule}
    \|P_\rho^{\p_+ SM}I_g\imath_\delta^M(f_k)-y\|_{L^2_\mu(\p_+ SM)}\le \varepsilon.
\end{equation}

\subsection{2D setting}
In 2D, $M$ is the unit disk, and the metrics are given by \eqref{equation: lambda functions} with $n=2$. Figure~\ref{figure: geodesics and sinograms} illustrates sample geodesics for each geometry.

We use \emph{fan-beam coordinates} on $\p_+ SM$, defined as
$$
    (\theta, \alpha) \mapsto (x(\theta), v(\theta, \alpha)),\qquad 0\le \theta< 2\pi,\quad -\frac{\pi}{2}< \alpha< \frac{\pi}{2},
$$
where
$$
    x(\theta) := (\cos(\theta), \sin(\theta)),\qquad v(\theta, \alpha) := e^{-\lambda(x(\theta))}(\cos(\theta + \pi + \alpha), \sin(\theta + \pi + \alpha)).
$$
The measure $d\mu$ in these coordinates will take the form $d\mu(x, v) = e^{\lambda(x(\theta))}\cos(\alpha)\,d\alpha\,d\theta$.

\subsubsection*{Discretizations}
The mesh $\cT_\delta(M)$ is obtained from a $128\times128$ equispaced Cartesian grid of $[-1, 1]\times [-1, 1]$ using the cell-selection rule in Section~\ref{subsection: experimental setup}. This gives $K=|\cT_\delta(M)|=12892$ with $\Delta x_1=\Delta x_2=1/64$.

The mesh $\cT_\rho(\p_+ SM)$ consists of a $256\times 192$ uniform Cartesian grid of $[0, 2\pi)\times (-\pi/2, \pi/2)$.  Hence,
$N=|\cT_\rho(\p_+ SM)|=49152$. The midpoint weights are
$$
    \mu(E)\approx \cos(\alpha^E)\,e^{\lambda(x(\theta^E))}\,\Delta\theta\,\Delta\alpha,\qquad \Delta\theta=\frac{\pi}{128},\quad \Delta\alpha=\frac{\pi}{192},
$$
where $(\theta^E, \alpha^E)$ is the center of $E\in \cT_\rho(\p_+ SM)$. The corresponding initial state for the center ray is $(x^E, v^E)=(x(\theta^E),v(\theta^E, \alpha^E))$.

\subsection{3D setting}
We next consider the unit ball in $\R^3$, with the metrics in \eqref{equation: lambda functions} specialized to $n=3$. In particular, the focusing lens is centered at $(0.2, 0, 0)$.

We use the 3D version of fan-beam coordinates on $\p_+ SM$, which we now describe. First, we use the spherical coordinates for the boundary $\p M$ defined as
$$
    (\theta,\varphi)\mapsto x(\theta,\varphi),\qquad 0< \theta <\pi,\quad 0\le \varphi< 2\pi,
$$
where
$$
    x(\theta,\varphi) := (\sin(\theta)\cos(\varphi),\sin(\theta)\sin(\varphi),\cos(\theta)).
$$
Next, consider the Euclidean orthonormal spherical frame $(e_r, e_\theta, e_\varphi)$ at the boundary point $x(\theta,\varphi)$
$$
    \begin{aligned}
        e_r &:= x(\theta,\varphi) = (\sin(\theta)\cos(\varphi), \sin(\theta)\sin(\varphi), \cos(\theta)), \\
        e_\theta &:= \frac{\p x}{\p \theta}(\theta,\varphi) = (\cos(\theta)\cos(\varphi), \cos(\theta)\sin\varphi, -\sin(\theta)), \\
        e_\varphi &:=\frac{1}{\sin(\theta)} \frac{\p x}{\p \varphi}(\theta,\varphi) = (-\sin(\varphi), \cos(\varphi), 0).
    \end{aligned}
$$
Here $e_r$ is the outward Euclidean unit normal to $\p M$, while $e_\theta$ and $e_\varphi$ are Euclidean unit tangent vectors in the directions of increasing $\theta$ and $\varphi$, respectively. Then the 3D fan-beam coordinates on $\p_+ SM$ will be defined as
$$
    (\theta,\varphi,\alpha,\gamma)\mapsto (x(\theta,\varphi),v(\theta,\varphi,\alpha,\gamma)),\qquad 0< \theta< \pi,\quad 0\le \varphi< 2\pi,\quad 0< \alpha< \frac{\pi}{2},\quad 0\le \gamma< 2\pi
$$
where
$$
    v(\theta,\varphi,\alpha,\gamma) := e^{-\lambda(x(\theta, \varphi))}\big(-\cos(\alpha) e_r + \sin(\alpha) \cos(\gamma) e_\theta + \sin(\alpha) \sin(\gamma) e_\varphi \big).
$$
Note that $\alpha$ is the angle with the inward normal.

Since the boundary area element is $e^{2\lambda(x(\theta, \varphi))}\sin(\theta)\,d\theta\,d\varphi$ and the unit-sphere angular element is $\sin\alpha\,d\alpha\,d\gamma$, the influx measure becomes
\begin{equation}\label{equation: influx measure spatial}
    d\mu = e^{2\lambda(x(\theta,\varphi))}\sin(\theta)\cos(\alpha)\sin(\alpha)\,d\theta\,d\varphi\,d\alpha\,d\gamma.
\end{equation}

\subsubsection*{Discretizations}
The mesh $\cT_\delta(M)$ is obtained from a $64\times64\times64$ equispaced Cartesian grid of $[-1, 1]\times [-1, 1]\times [-1, 1]$ using the same cell-selection rule. This gives $K=|\cT_\delta(M)|=137376$, with $\Delta x_1=\Delta x_2=\Delta x_3=1/32$.

For the mesh $\cT_\rho(\p_+ SM)$, we use a $32\times 64\times 32\times 64$ product grid in $(\theta,\varphi,\alpha,\gamma)$. Thus, $N = |\cT_\rho(\p_+ SM)| = 4194304$, and
$$
    \Delta\theta=\frac{\pi}{32},\quad \Delta\varphi=\frac{\pi}{32},\quad \Delta\alpha=\frac{\pi}{64},\quad \Delta\gamma=\frac{\pi}{32}.
$$
For a cell $E\in \cT_\rho(\p_+ SM)$ centered at $(\theta^E, \varphi^E, \alpha^E, \gamma^E)$, let $x_E=x(\theta^E,\varphi^E)$. The midpoint weights are
$$
    \mu(E)\approx e^{2\lambda(x_E)}\sin(\theta^E)\cos(\alpha^E)\sin(\alpha^E)\,\Delta\theta\,\Delta\varphi\,\Delta\alpha\,\Delta\gamma.
$$
The corresponding initial state for the center ray is $(x^E, v^E) = (x(\theta^E,\varphi^E), v(\theta^E,\varphi^E,\alpha^E,\gamma^E))$.

\subsection{Experiments}
Figure~\ref{figure: shepp-logan phantom planar} shows the piecewise constant 2D phantom, and Figure~\ref{figure: geodesics and sinograms} presents its sinograms for the three metrics defined in \eqref{equation: lambda functions}. Figure~\ref{figure: shepp-logan phantom spatial} illustrates the slices of the piecewise constant 3D phantom, while the slices of its sinograms are shown in Figure~\ref{figure: slices of sinograms spatial}. We use the reconstruction and visualization conventions of Section~\ref{subsection: experimental setup}.

\begin{figure}
    \centering
    \includegraphics[width=5cm]{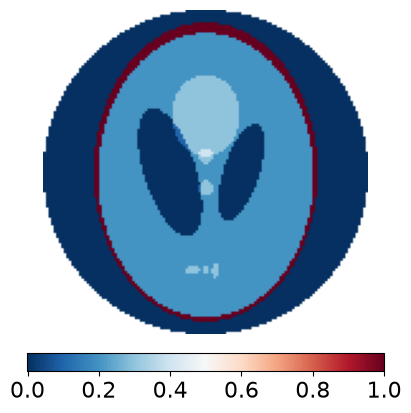}
    \caption{Piecewise constant projection of 2D Shepp-Logan/Toft phantom onto $128\times 128$ uniform Cartesian mesh $\cT_\delta(M)$.}
    \label{figure: shepp-logan phantom planar}
\end{figure}

\begin{figure}
    \centering
    \includegraphics[width=15cm]{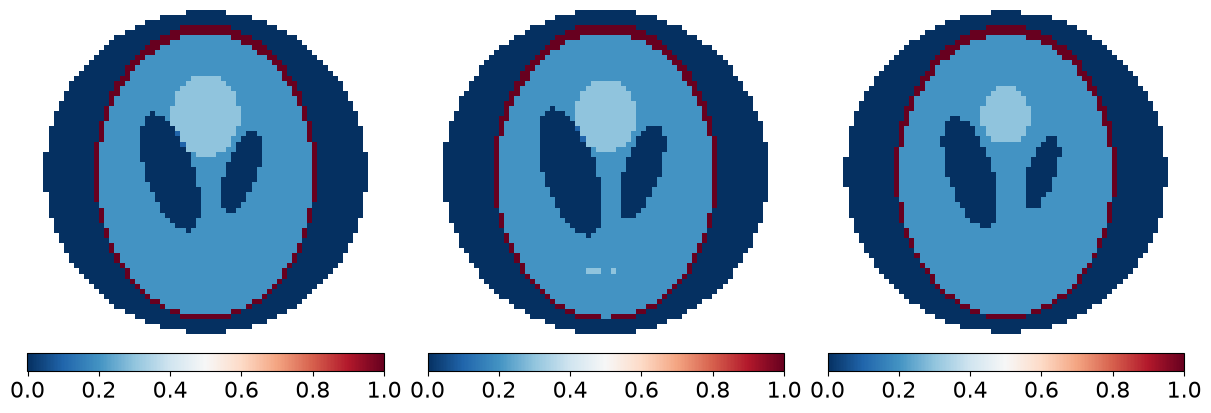}
    \caption{Slices of piecewise constant projection of 3D Shepp-Logan/Toft phantom onto $64\times 64\times 64$ uniform Cartesian mesh $\cT_\delta(M)$. From left to right: slices at $z \approx -0.125$, $z \approx 0.0$ and $z \approx 0.125$.}
    \label{figure: shepp-logan phantom spatial}
\end{figure}

\begin{figure}
    \centering
    \includegraphics[width=15cm]{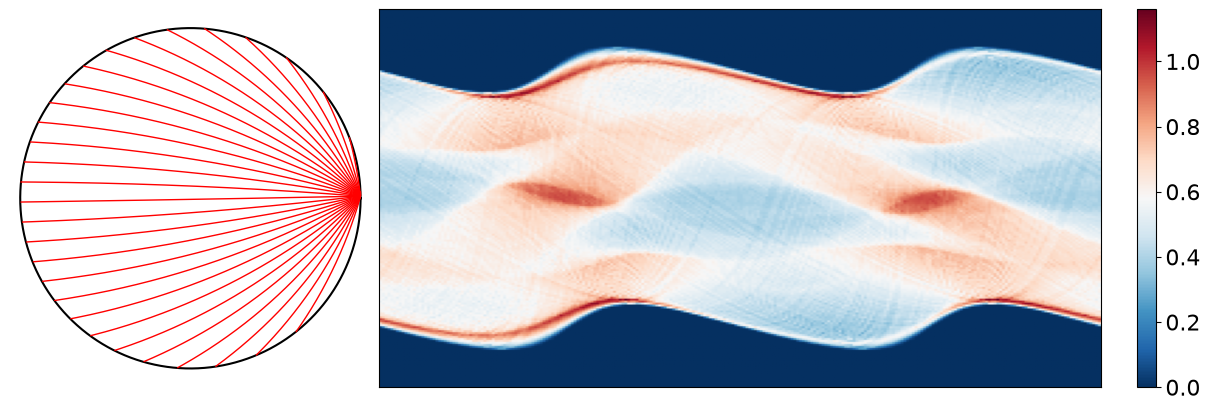}\\
    \includegraphics[width=15cm]{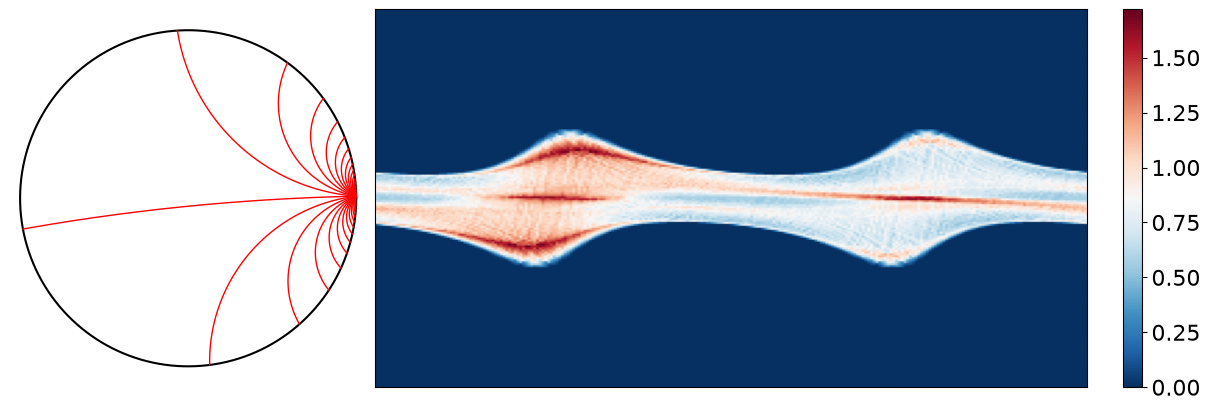}\\
    \includegraphics[width=15cm]{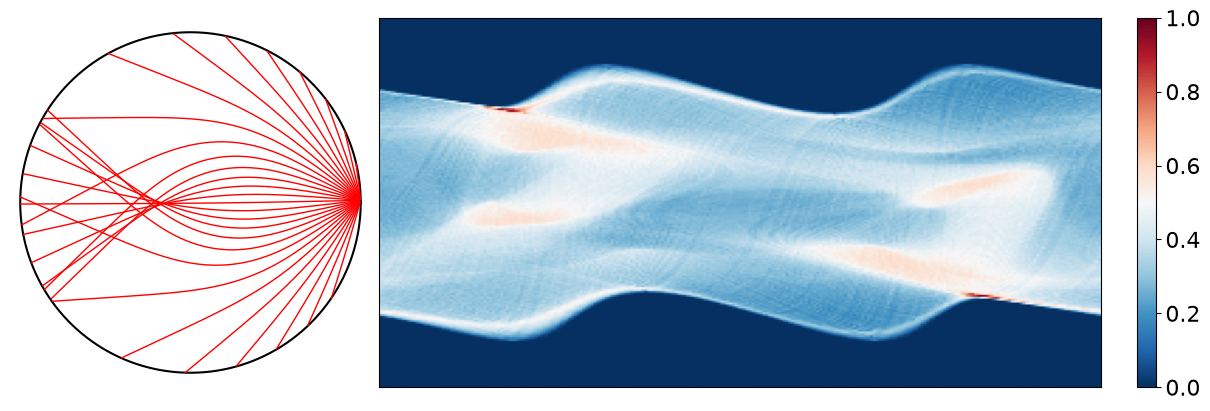}
    \caption{Some geodesics inside $M$ and sinogram corresponding to $f^\dagger$. Rows 1, 2 and 3 correspond to 2D versions of Experiments~\ref{experiment: stereographic},~\ref{experiment: poincare} and \ref{experiment: focusing lens}, respectively.}
    \label{figure: geodesics and sinograms}
\end{figure}

\begin{figure}
    \centering
    \includegraphics[width=15cm]{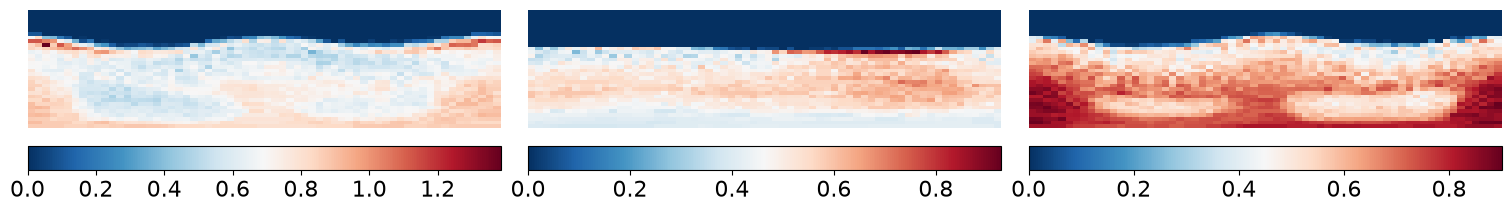}\\
    \includegraphics[width=15cm]{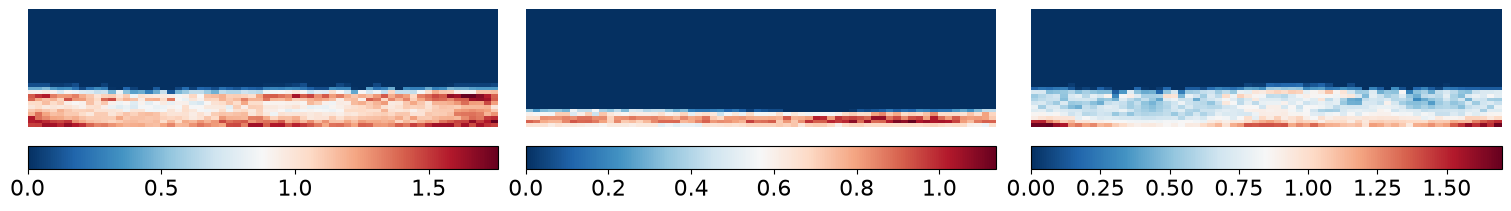}\\
    \includegraphics[width=15cm]{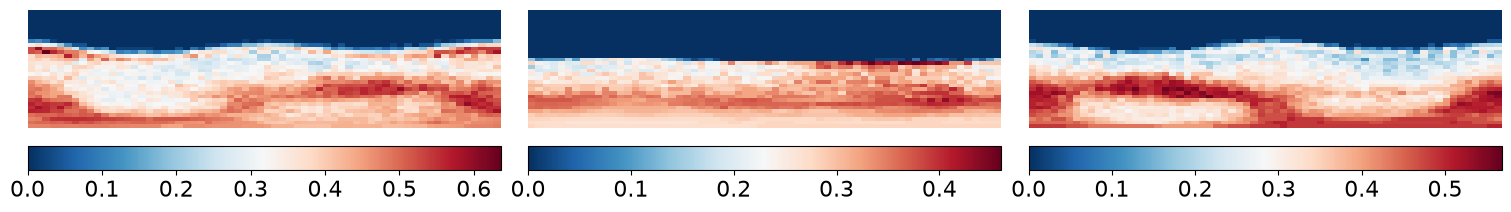}
    \caption{Sinogram corresponding to $f^\dagger$. Rows 1, 2 and 3 correspond to 3D versions Experiments~\ref{experiment: stereographic},~\ref{experiment: poincare} and \ref{experiment: focusing lens}, respectively. From left to right: slices at $(\theta, \varphi) \approx (\pi / 4, \pi / 2)$, $(\theta, \varphi) \approx (\pi / 2, \pi)$ and $(\theta, \varphi) \approx (3\pi / 4, 3\pi / 2)$.}
    \label{figure: slices of sinograms spatial}
\end{figure}

\begin{Experiment}\label{experiment: stereographic}
    We first consider the spherical stereographic metric, $\lambda = \lambda_1$ defined as in \eqref{equation: lambda functions}. The spherical metric gives the smallest reconstruction errors in both dimensions. SGD and CGM achieve similar accuracy, but CGM reaches the prescribed tolerance in substantially fewer iterations. The reconstructions and pointwise error maps are shown in the first row of Figure~\ref{figure: reconstructions and errors} and in Figure~\ref{figure: reconstructions and errors spatial spherical_2}.
\end{Experiment}

\begin{Experiment}\label{experiment: poincare}
    In the second experiment, we use the Poincar\'e metric, $\lambda = \lambda_2$ as in \eqref{equation: lambda functions}. This experiment requires the most iterations in both dimensions. In 2D, both methods leave a relative error of about $8.3\%$, despite the large difference in the number of iterations. The reconstructions in the second row of Figure~\ref{figure: reconstructions and errors} and in Figure~\ref{figure: reconstructions and errors spatial poincare_1_1} show ring-like artifacts near the center, suggesting a limitation of the measurement sampling. In the Poincar\'e geometry, only geodesics entering $M$ close to the inward normal reach this region; in 2D setting see the sample geodesics in the second row of Figure~\ref{figure: geodesics and sinograms}. Therefore, a uniform Cartesian grid of $[0, 2\pi)\times (-\pi/2, \pi/2)$, which is a parameterization of $\p_+ SM$, leaves the region near the center sparsely sampled. Similar observations have been made previously in \cite[Section~3.2]{Mo2014SIAMJIS}. The artifacts are related to sampling the geodesics which was studied in \cite{MoSt2023SIAMJMA, St2023IP}.
\end{Experiment}

\begin{Experiment}\label{experiment: focusing lens}
    Our final experiment concerns the focusing lens metric, $\lambda = \lambda_3$ given in \eqref{equation: lambda functions}. This experiment lies outside the geometric assumptions of the reconstruction algorihms of Section~\ref{section: reconstructions} and is included to examine the numerical behavior of the reconstruction methods in this setting. Despite the presence of conjugate points, the planar focusing-lens reconstruction is more accurate than the planar Poincaré reconstruction. The ordering changes in 3D, where the focusing lens gives the largest errors among the three metrics; see
Figure~\ref{figure: reconstructions and errors spatial focusing_lens_1_2}. CGM nevertheless improves on SGD in both dimensions. The 2D reconstructions in the third row of Figure~\ref{figure: reconstructions and errors} exhibit short streaks radiating from a region near $(0.2,0)$. This pattern is consistent with geodesics refocusing near a conjugate point, with reconstruction errors forming streaks that radiate outward from the surrounding neighborhood; see the sample geodesics in the third row of Figure~\ref{figure: geodesics and sinograms}. The connection between conjugate points and reconstruction artifacts was studied in \cite{MoStUh2015CMP}.
\end{Experiment}

\begin{figure}
    \centering
    \includegraphics[width=7.5cm]{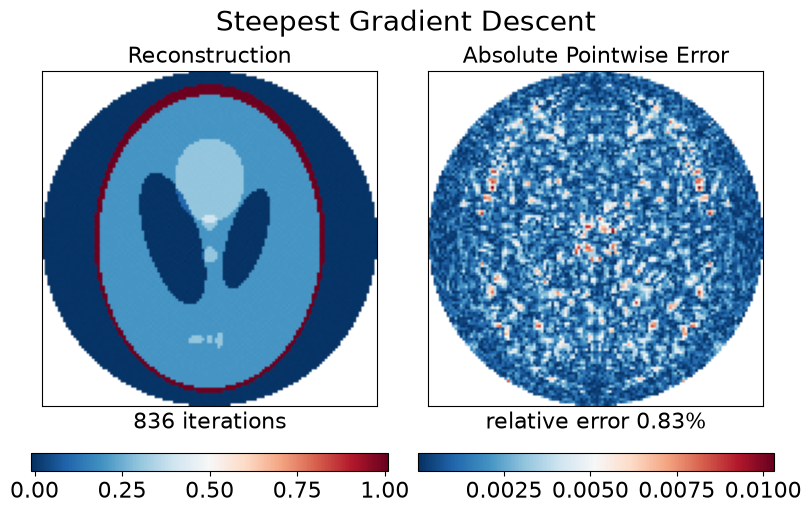}
    \includegraphics[width=7.5cm]{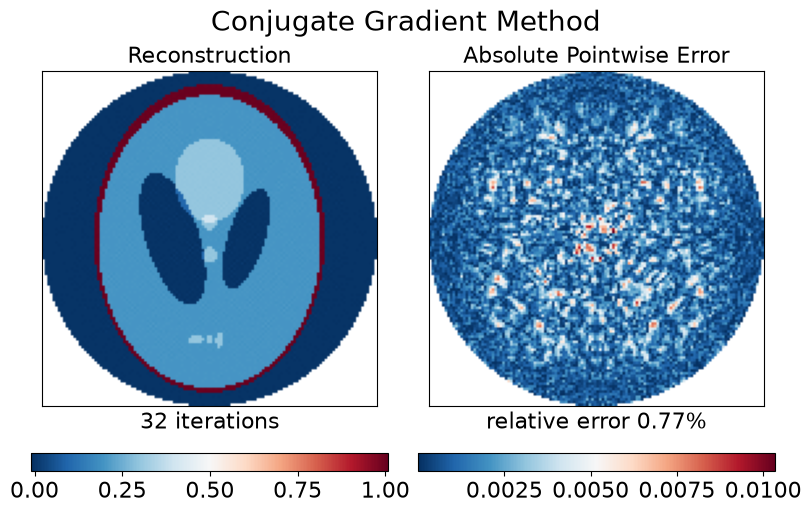}\\
    \includegraphics[width=7.5cm]{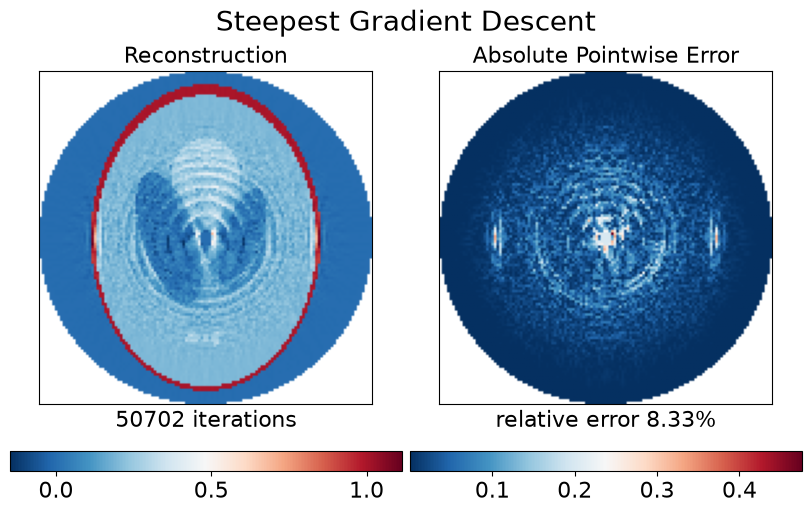}
    \includegraphics[width=7.5cm]{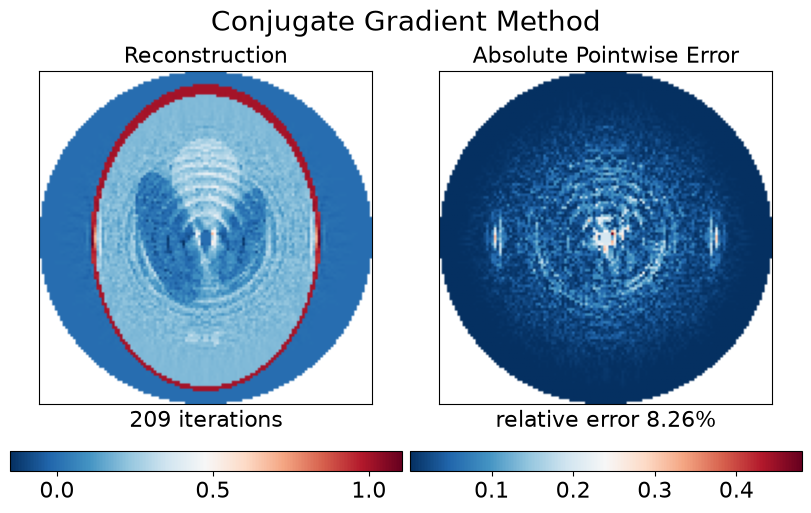}\\
    \includegraphics[width=7.5cm]{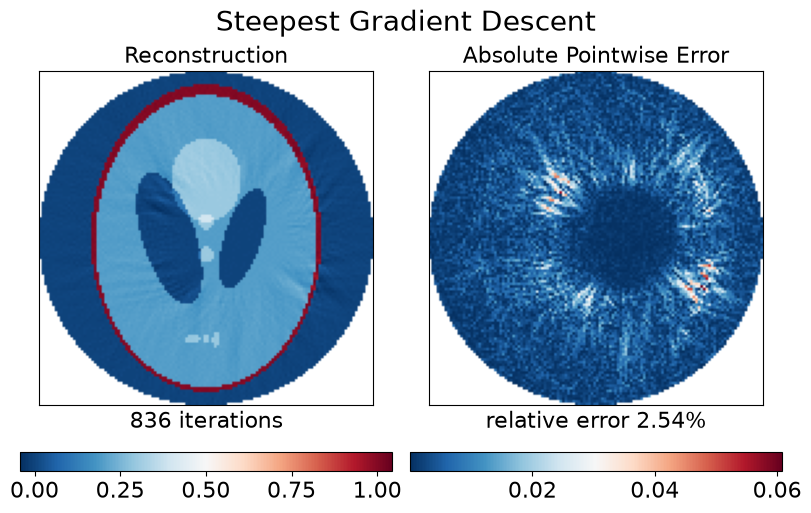}
    \includegraphics[width=7.5cm]{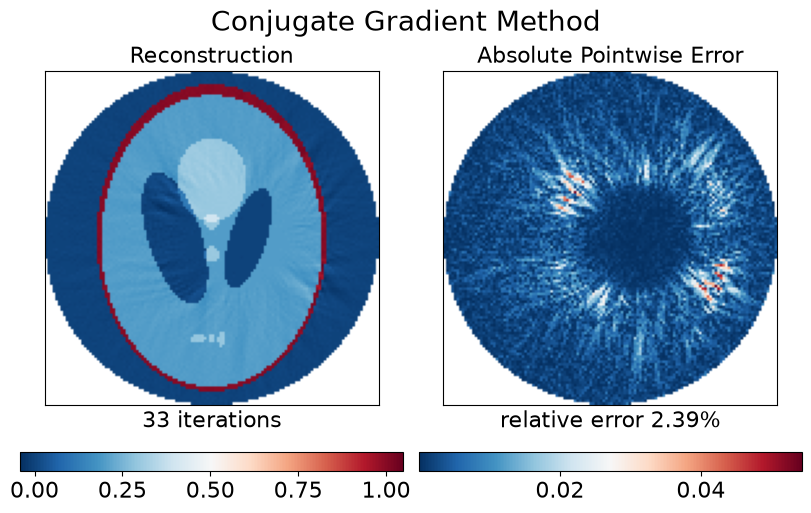}
    \caption{Reconstructions and absolute pointwise errors via SGD and CGM. Rows 1, 2 and 3 correspond to 2D versions of Experiments~\ref{experiment: stereographic},~\ref{experiment: poincare} and \ref{experiment: focusing lens}, respectively. The stopping rule is~\eqref{equation: stopping rule} with $\varepsilon = 0.001$.}
    \label{figure: reconstructions and errors}
\end{figure}

\begin{figure}
    \centering
    \includegraphics[width=11cm]{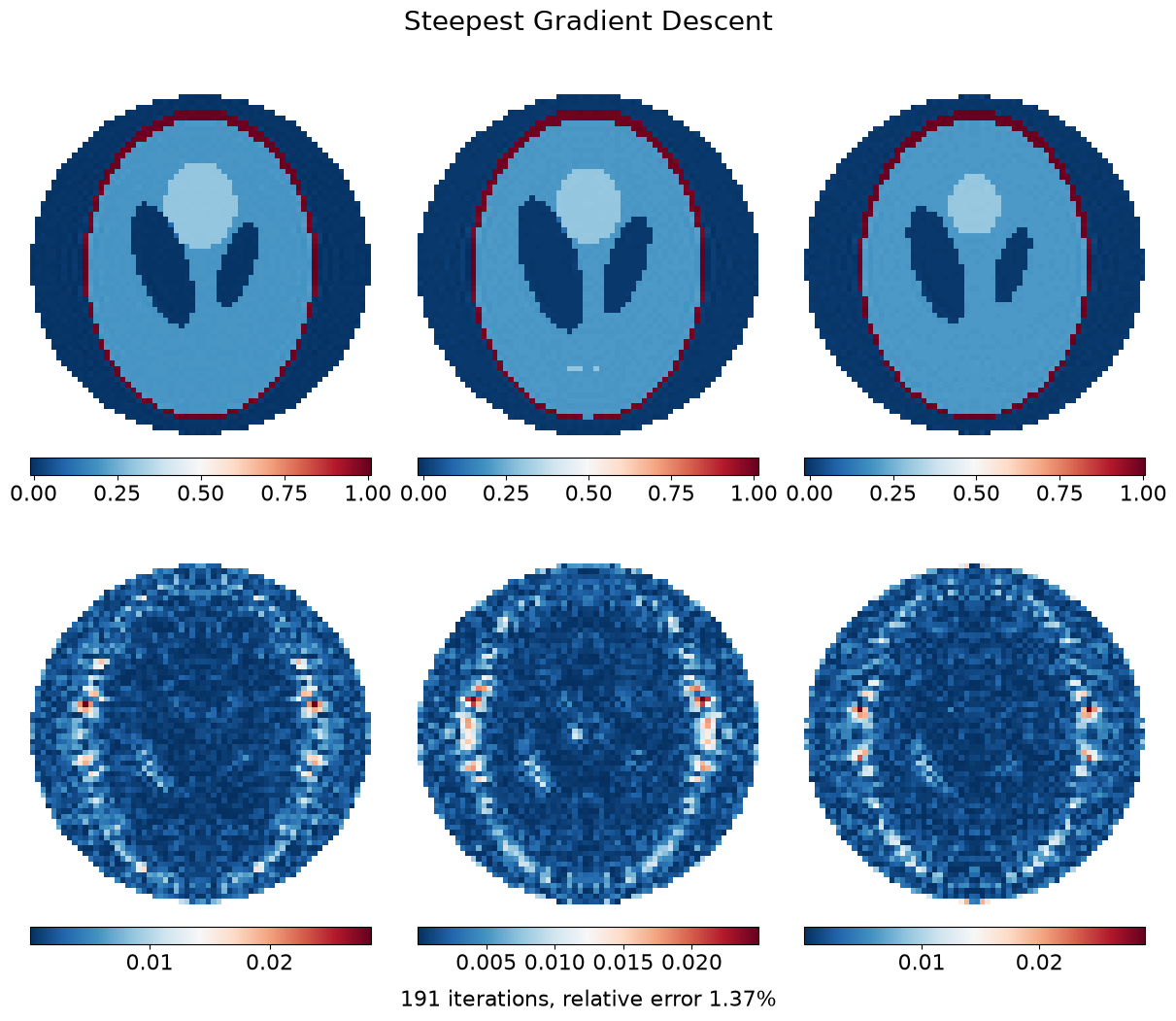}\\
    \includegraphics[width=11cm]{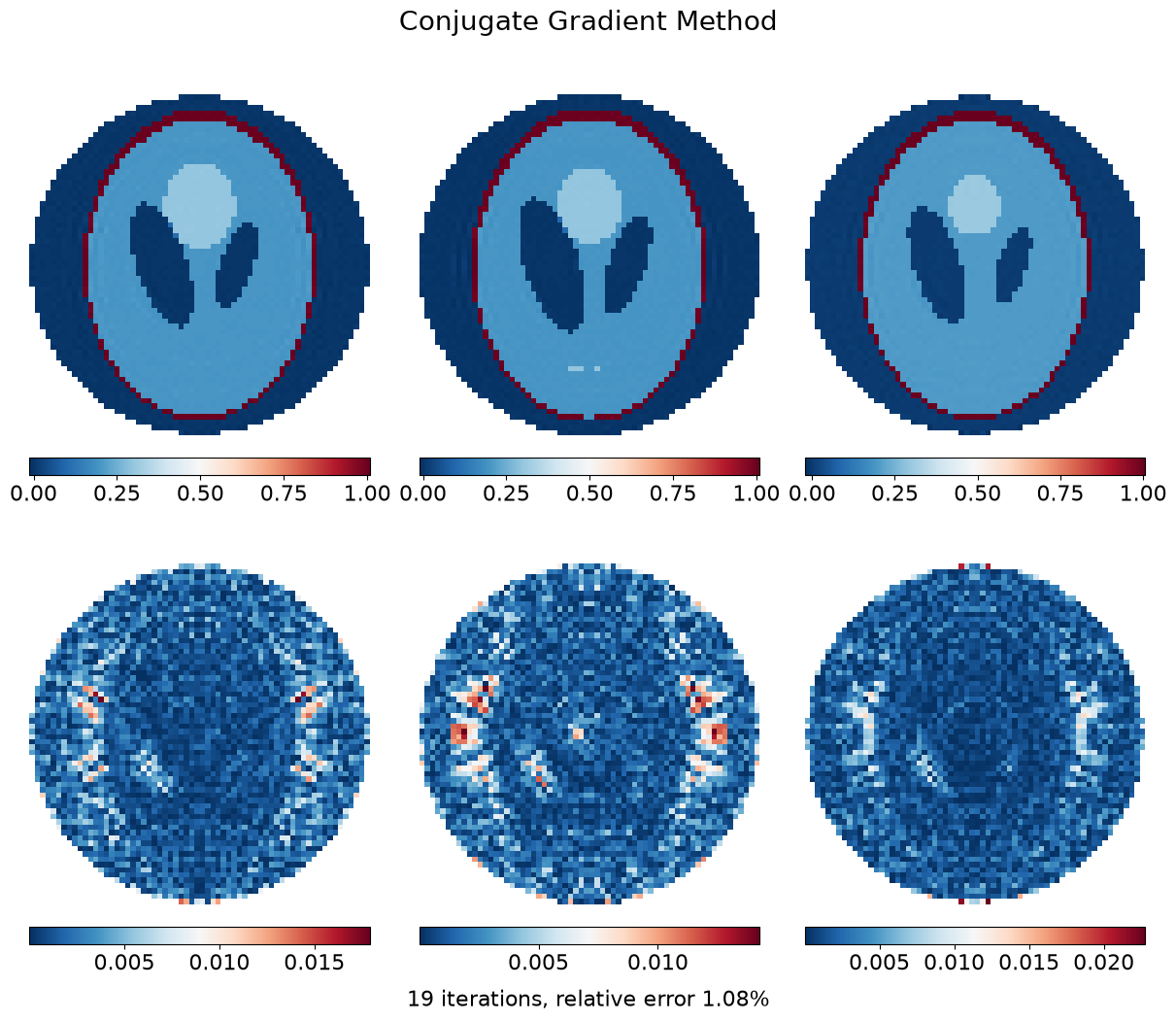}
    \caption{3D version of Experiment~\ref{experiment: stereographic}. Reconstructions and absolute pointwise errors via SGD and CGM. From left to right: slices at $z \approx -0.125$, $z \approx 0.0$ and $z \approx 0.125$. The stopping rule is~\eqref{equation: stopping rule} with $\varepsilon = 0.01$.}
    \label{figure: reconstructions and errors spatial spherical_2}
\end{figure}

\begin{figure}
    \centering
    \includegraphics[width=11cm]{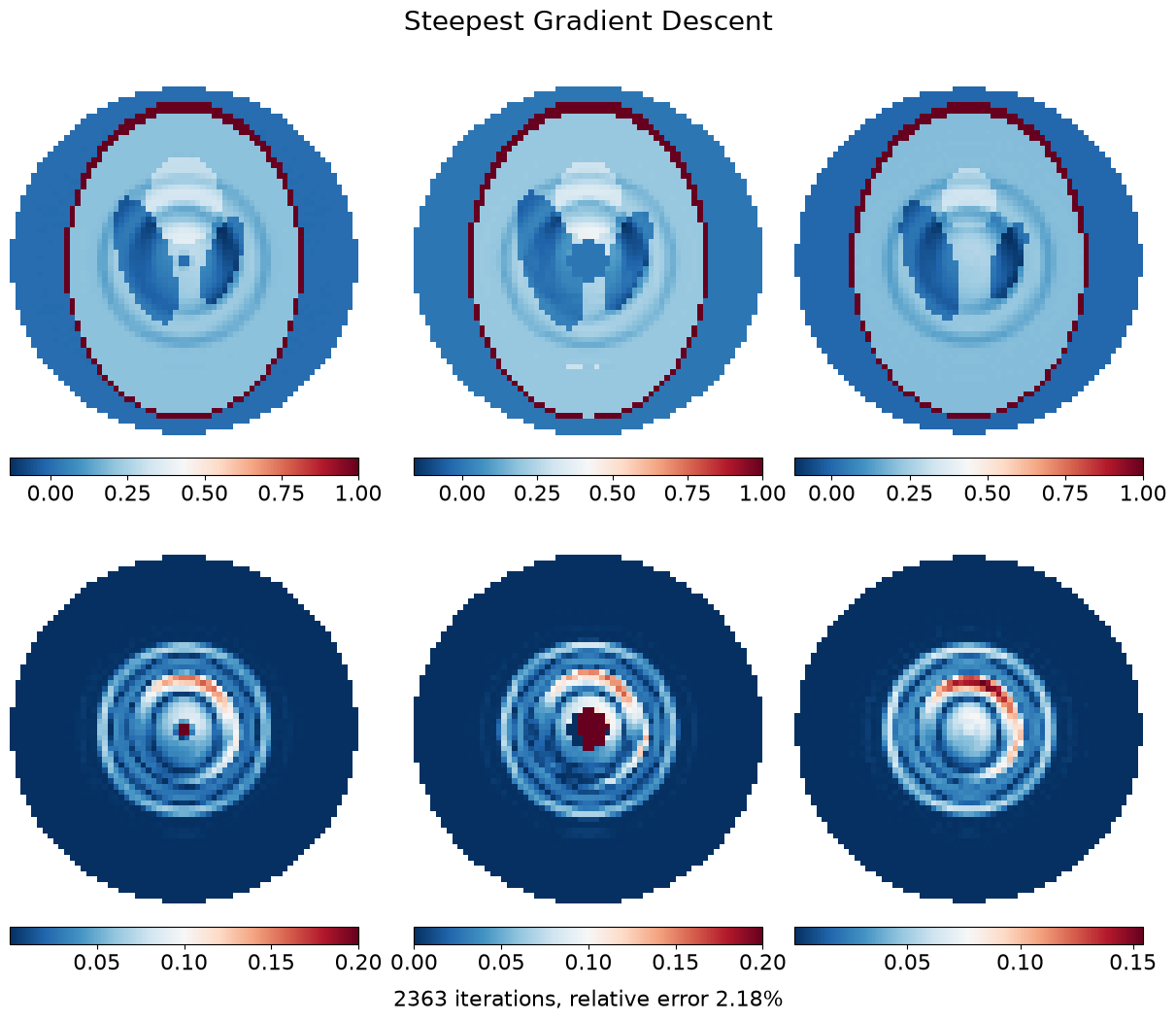}\\
    \includegraphics[width=11cm]{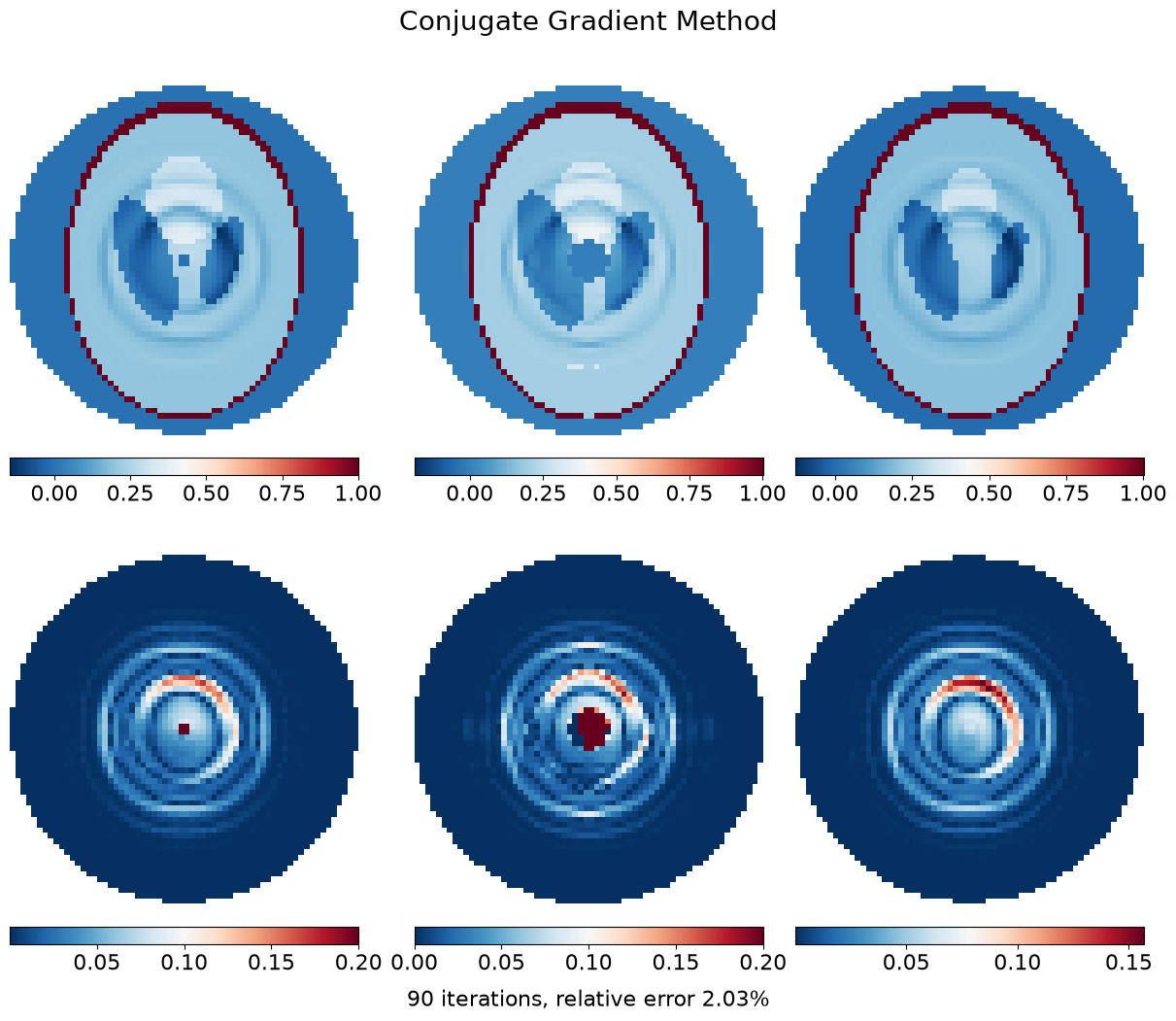}
    \caption{3D version of Experiment~\ref{experiment: poincare}. Reconstructions and absolute pointwise errors via SGD and CGM. From left to right: slices at $z \approx -0.125$, $z \approx 0.0$ and $z \approx 0.125$. The stopping rule is~\eqref{equation: stopping rule} with $\varepsilon = 0.01$.}
    \label{figure: reconstructions and errors spatial poincare_1_1}
\end{figure}

\begin{figure}
    \centering
    \includegraphics[width=11cm]{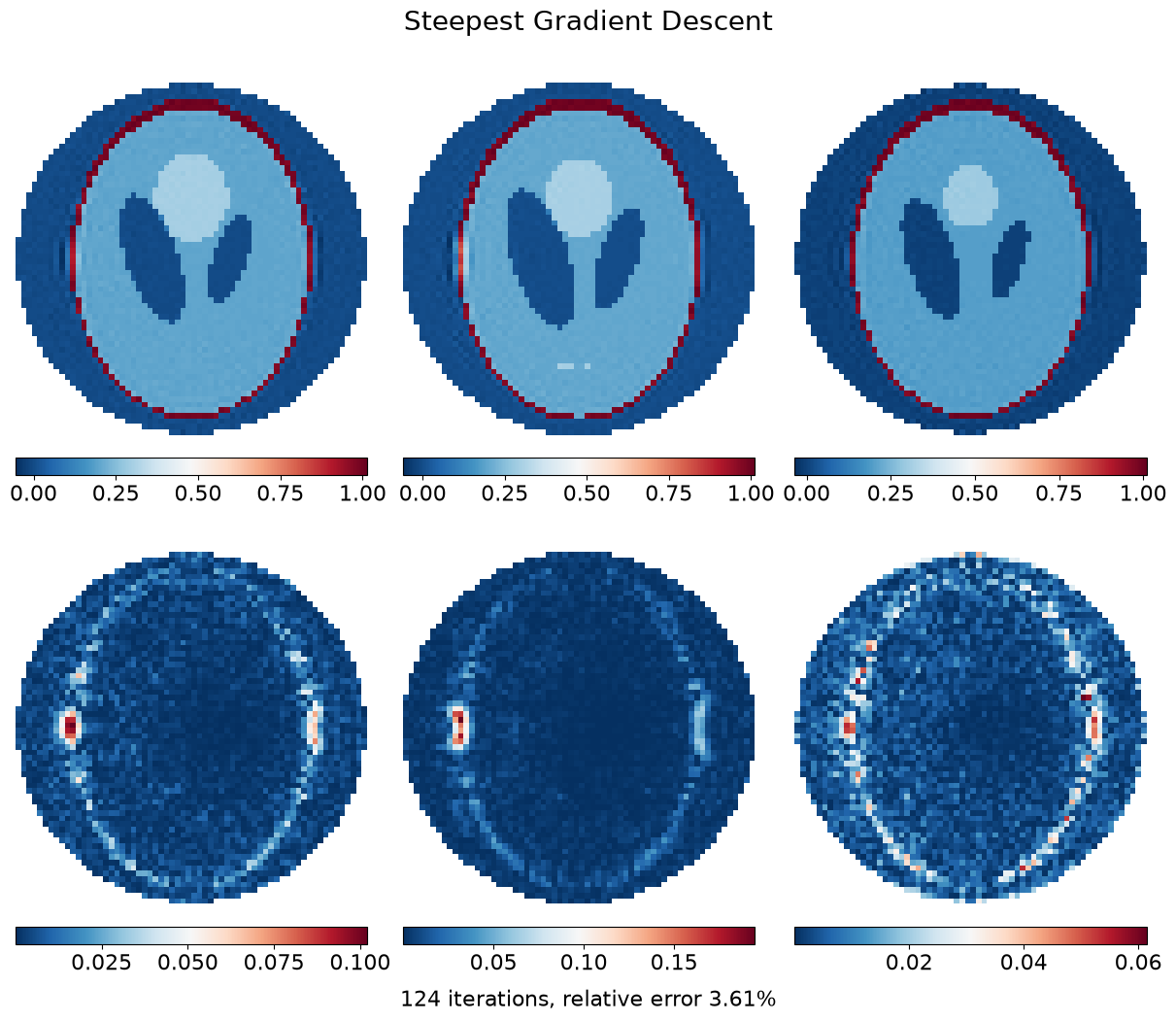}\\
    \includegraphics[width=11cm]{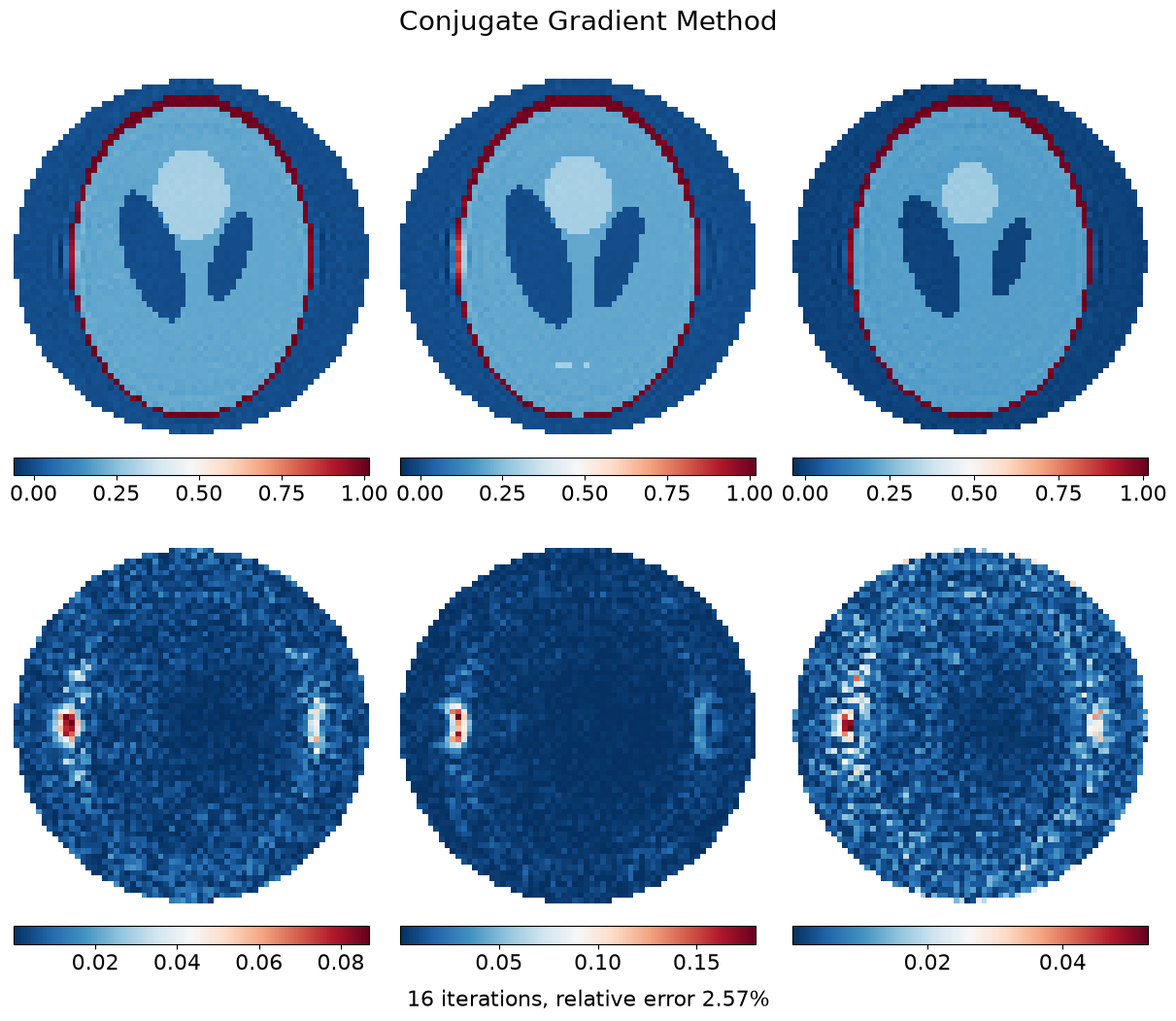}
    \caption{3D version of Experiment~\ref{experiment: focusing lens}. Reconstructions and absolute pointwise errors via SGD and CGM. From left to right: slices at $z \approx -0.125$, $z \approx 0.0$ and $z \approx 0.125$. The stopping rule is~\eqref{equation: stopping rule} with $\varepsilon = 0.01$.}
    \label{figure: reconstructions and errors spatial focusing_lens_1_2}
\end{figure}

\section{Conclusion}\label{section: conclusion}
We have studied the inversion of the geodesic ray transform from finitely many local averages of its data. Under the geometric assumptions of Section~\ref{section: stability estimates}, sufficiently fine measurement partitions preserve Lipschitz stability on any fixed finite-dimensional reconstruction space $\cW\subset L^2(M)$. We have also developed iterative reconstruction algorithms and illustrated their performance in 2D and 3D. In all of the reported experiments, CGM requires substantially fewer iterations than SGD while achieving comparable or smaller reconstruction errors. The reconstructions obtained by SGD and CGM exhibit artifacts whose appearances depend on the geometry of $(M,g)$ and the discretization of $\p_+ SM$.

The remaining artifacts motivate our ongoing work on deep-learning-based reconstruction methods for the geodesic ray transform. We are pursuing three directions to be developed in forthcoming papers:
\begin{itemize}
    \item We investigate whether a convolutional neural network (CNN) applied to CGM reconstructions can reduce these artifacts. This hybrid approach adapts the reconstruction-and-postprocessing strategy of \cite{JiMcFrUn2017TIP}, in which the inversion of the Radon transform via the filtered backprojection algorithm is combined with a CNN. See also \cite{XiZhChXiLiZhYaZhHu2018SR} for a related study.
    
    \item Inspired by the learned iterative approach of \cite{AdOk2017IP}, we investigate neural network architectures obtained by unrolling SGD and CGM. In other words, learning is incorporated into the reconstruction steps rather than applied only to the final image.
    
    \item We will use operator learning to approximate
    $$
        (I_g\imath_\delta^M)^{-1}: I_g(V_\delta(M)) \longrightarrow V_\delta(M),
        \qquad I_g(V_\delta(M))\subset L^2_\mu(\p_+ SM).
    $$
    We aim to establish approximation guarantees for this end-to-end approach and assess its reconstruction accuracy and potential for reducing artifacts through numerical experiments in 2D and 3D.
\end{itemize}

\bibliographystyle{abbrv}

\end{document}